\newcommand*{\mailto}[1]{\href{mailto:#1}{\nolinkurl{#1}}}
\newcommand{\arxiv}[1]{\href{http://arxiv.org/abs/#1}{arXiv:#1}}
\newtheorem{theorem}{Theorem}[section]
\newtheorem{lemma}[theorem]{Lemma}
\newtheorem{corollary}[theorem]{Corollary}
\newtheorem{remark}[theorem]{Remark}
\newtheorem{hypothesis}[theorem]{Hypothesis}
\newcommand{\R}{{\mathbb R}}
\newcommand{\N}{{\mathbb N}}
\newcommand{\Z}{{\mathbb Z}}
\newcommand{\C}{{\mathbb C}}
\newcommand{\OO}{\mathcal{O}}
\newcommand{\oo}{o}
\newcommand{\nn}{\nonumber}
\newcommand{\ol}{\overline}
\newcommand{\ti}{\tilde}
\newcommand{\spr}[2]{\langle #1 , #2 \rangle}
\newcommand{\dbspr}[2]{[ #1 , #2 ]}
\newcommand{\E}{\mathrm{e}}
\newcommand{\I}{\mathrm{i}}
\newcommand{\im}{\mathrm{Im}}
\newcommand{\re}{\mathrm{Re}}
\newcommand{\dom}{\mathfrak{D}}
\DeclareMathOperator{\lspan}{span}
\newcommand{\ceil}[1]{\lceil#1 \rceil}
\newcommand{\lz}{\ell^2(\Z)}
\newcommand{\eps}{\varepsilon}
\newcommand{\sig}{\sigma}
\newcommand{\lam}{\lambda}
\numberwithin{equation}{section}
\begin{document}

\title[Singular Weyl--Titchmarsh--Kodaira theory]{Singular Weyl--Titchmarsh--Kodaira theory for Jacobi operators}

\author[J.\ Eckhardt]{Jonathan Eckhardt}
\address{Faculty of Mathematics\\ University of Vienna\\
Nordbergstrasse 15\\ 1090 Wien\\ Austria}
\email{\mailto{jonathan.eckhardt@univie.ac.at}}
\urladdr{\url{http://homepage.univie.ac.at/jonathan.eckhardt/}}

\author[G.\ Teschl]{Gerald Teschl}
\address{Faculty of Mathematics\\ University of Vienna\\
Nordbergstrasse 15\\ 1090 Wien\\ Austria\\ and International
Erwin Schr\"odinger
Institute for Mathematical Physics\\ Boltzmanngasse 9\\ 1090 Wien\\ Austria}
\email{\mailto{Gerald.Teschl@univie.ac.at}}
\urladdr{\url{http://www.mat.univie.ac.at/~gerald/}}

\thanks{Oper. Matrices {\bf 7}, 695--712 (2013)}
\thanks{{\it Research supported by the Austrian Science Fund (FWF) under Grant No.\ Y330}}

\keywords{Jacobi operators, inverse spectral theory, discrete spectra}
\subjclass[2010]{Primary 47B36, 34B20; Secondary 47B39, 34A55}

\begin{abstract}
We develop singular Weyl--Titchmarsh--Kodaira theory for Jacobi operators. In particular, we establish
existence of a spectral transformation as well as local Borg--Marchenko and Hochstadt--Liebermann type
uniqueness results.
\end{abstract}

\maketitle

\section{Introduction}
\label{sec:int}

Classical Weyl--Titchmarsh--Kodaira theory was originally developed for one-dimensional Schr\"odinger operators
with one regular endpoint. Moreover, it has been shown by Kodaira \cite{ko}, Kac \cite{ka} and more recently by Fulton \cite{ful08},
Gesztesy and Zinchenko \cite{gz}, Fulton and Langer \cite{fl}, Kurasov and Luger \cite{kl}, and Kostenko, Sakhnovich, and Teschl
\cite{kt}, \cite{kst}, \cite{kst2}, \cite{kst3}, that many aspects of this classical theory still can be established at a
singular endpoint. It has recently proven to be a powerful tool for inverse spectral theory for these operators
and further refinements were given by us in \cite{je}, \cite{egnt}, \cite{et}, \cite{kt}.

Of course the analog of classical Weyl--Titchmarsh--Kodaira theory is also
a basic ingredient for inverse spectral theory for Jacobi operators \cite{tjac}. The purpose of the present paper is
to extend singular Weyl--Titchmarsh--Kodaira theory to the case of Jacobi operators. While the overall approach
generalizes in a straightforward manner, there are some significant differences in the proofs of our main inverse uniqueness result:
Theorem~\ref{thmbm} and Theorem~\ref{thmdBuniqS}. This is related to the fact that in the case of Jacobi operators one
needs to determine two coefficients $a(n)$ and $b(n)$ while in the case of one-dimensional Schr\"odinger operators
there is only one coefficient $q(x)$. In fact, it is well known that in the case of general Sturm--Liouville operators with
three coefficients $r(x)$, $p(x)$, and $q(x)$ the operator can only be determined up to a Liouville transform (cf.\ e.g.\ \cite{ben1}, \cite{je2}).

As our main result we first prove a local Borg--Marchenko result (Theorem~\ref{thmbm}) which generalizes the
classical result whose local version was first established by Gesztesy, Kiselev, and Makarov \cite{gkm}
(see also Weikard \cite{w}). Moreover, we show that in the
case of purely discrete spectra the spectral measure uniquely determines the operator (Theorem~\ref{thmSpectFuncDisc})
and use this to establish a general Hochstadt--Liebermann-type uniqueness result (Theorem~\ref{thmHL}). Finally
we use the connection with de Branges spaces of entire functions in order to give another general criterion when
the spectral measure uniquely determines the operator (Theorem~\ref{thmdBuniqS}).

\section{Singular Weyl--Titchmarsh--Kodaira theory}
\label{sec:swm}

We will be concerned with operators in $\lz$ associated with the
difference expression
\begin{equation}
(\tau f)(n) = a(n) f(n+1) + a(n-1) f(n-1) +b(n) f(n), \quad n\in\Z,
\end{equation}
where the sequences $a$, $b \in \ell(\Z)$ satisfy
\begin{hypothesis} \label{hab}
Suppose
\begin{equation}
a(n)>0, \quad b(n) \in \R, \quad n \in \Z.
\end{equation}
\end{hypothesis}
If $\tau$ is limit point ($l.p.$) at both $\pm\infty$ (cf., e.g., \cite{tjac}), then
$\tau$ gives rise to a unique self-adjoint operator $H$ when
defined maximally. Otherwise, we need to fix a boundary condition at each
endpoint where $\tau$ is limit circle ($l.c.$) (cf., e.g., \cite{tjac}).
Throughout this paper we denote by $u_\pm(z,\cdot\,)$, $z \in \C$, nontrivial solutions
of $\tau u = z u$ which satisfy the boundary condition at $\pm\infty$ (if any) with
$u_\pm(z,\cdot\,) \in \ell^2_\pm(\Z)$, respectively. Here $\ell^2_\pm(\Z)$ denotes the
sequences in $\ell(\Z)$ being $\ell^2$ near $\pm\infty$. The solution $u_\pm(z,\cdot\,)$
might not exist for $z \in \R$, but if it exists it is
unique up to a constant multiple.

Picking a fixed $z_0 \in \C \backslash \R$ we can characterize $H$ by
\begin{equation}
H : \dom(H)\subseteq\ell^2(\Z)  \to  \lz, \qquad f \mapsto \tau f,
\end{equation}
where the domain of $H$ is explicitly given by
\begin{equation}
\dom(H) = \{ f \in \lz \,|\, \tau f \in \lz, \: \lim_{n \to \pm\infty}
W(u_\pm(z_0),f)(n) = 0\}
\end{equation}
and
\begin{equation}
W(f,g)(n) = a(n) \Big( f(n)g(n+1) - f(n+1)g(n) \Big), \quad n\in\Z
\end{equation}
denotes the (modified) Wronskian. The boundary condition at $\pm\infty$ imposes
no additional restriction on $f$ if $\tau$ is $l.p.$ at $\pm\infty$ and can hence be
omitted in this case. 
 We will also consider the operators $H_\pm$ which are obtained by restricting $H$ to $\pm\N$, respectively with a Dirichlet boundary condition at
$0$.

For the rest of this section we will closely follow the presentation from \cite{kst2}. Most proofs can be done literally following the
arguments in \cite{kst2} and hence we will omit them here. Our first ingredient to define an analogous singular Weyl function
at $-\infty$ is a system of entire solutions $\theta(z,n)$ and $\phi(z,n)$ such that $\phi(z,n)$ lies in the domain of
$H$ near $-\infty$ and whose Wronskian satisfies $W(\theta(z),\phi(z))=1$. To this end we require the following
hypothesis which turns out necessary and sufficient for such a system of solutions to exist.

\begin{hypothesis}\label{hyp:gen}
Suppose that the spectrum of $H_-$ is purely discrete.
\end{hypothesis}

Note that this hypothesis is for example satisfied if $a(n)=1$ and $b(n) \to +\infty$ as $n\to -\infty$
 or if $a(n) \to 0$ as $n\to -\infty$.

\begin{lemma}\label{lem:pt}
The following properties are equivalent:
\begin{enumerate}
\item The spectrum of $H_-$ is purely discrete.
\item There is a real entire solution $\phi(z,n)$, which is non-trivial and lies in the domain of $H$
near $-\infty$ for each $z\in\C$.
\item There are real entire solutions $\theta(z,n)$, $\phi(z,n)$ with $W(\theta,\phi)=1$, such that
 $\phi(z,n)$ is non-trivial and lies in the domain of $H$ near $-\infty$ for each $z\in\C$.
\end{enumerate}
\end{lemma}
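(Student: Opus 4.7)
The plan is to traverse the cycle (iii) $\Rightarrow$ (ii) $\Rightarrow$ (i) $\Rightarrow$ (iii), closely paralleling the Schr\"odinger analogue in \cite{kst2} with obvious adjustments to the difference setting. The implication (iii) $\Rightarrow$ (ii) is immediate, since the $\phi$ furnished by (iii) meets all requirements of (ii). For (ii) $\Rightarrow$ (i), fix $n_0\in\Z$ and let $c(z,n),s(z,n)$ be the entire fundamental system of $\tau u = zu$ with initial data $c(z,n_0) = s(z,n_0+1) = 1$ and $c(z,n_0+1) = s(z,n_0) = 0$. Expanding $\phi(z,n) = \alpha(z) c(z,n) + \beta(z) s(z,n)$ yields $\alpha(z) = \phi(z,n_0)$ and $\beta(z) = \phi(z,n_0+1)$, both real entire with no common zeros (by the non-triviality of $\phi(z,\cdot)$ for every $z$). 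Since $\phi(z,\cdot)$ is proportional to the Weyl solution at $-\infty$, the Weyl $m$-function satisfies $m_-(z) = \beta(z)/\alpha(z)$ and is therefore meromorphic on $\C$; by Herglotz theory the spectral measure of $H_-$ is carried by the real poles of $m_-$, hence purely discrete, so $\sigma(H_-)$ is purely discrete as well.

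For (i) $\Rightarrow$ (iii), enumerate the (real) eigenvalues of $H_-$ as $\{E_k\}_k$ with $|E_k|\to\infty$. The function $m_-(z)$ is meromorphic with simple poles precisely at the $E_k$. By Weierstrass's factorization theorem choose a real entire function $h(z)$ whose zero set equals $\{E_k\}$, all simple, and set
\begin{equation*}
\phi(z,n) := h(z)\bigl(c(z,n) + m_-(z)\, s(z,n)\bigr).
\end{equation*}
Then $\phi$ is real entire in $z$, solves $\tau\phi = z\phi$, and lies in the domain of $H$ near $-\infty$. It is non-trivial at every $z$: away from $\{E_k\}$ both $h(z)$ and the Weyl solution are non-zero, while at $z = E_k$ one has $\phi(E_k,\cdot) = h'(E_k)\,\mathrm{Res}_{z=E_k} m_-(z) \cdot s(E_k,\cdot)$, a nonzero multiple of the corresponding eigenfunction. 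To produce $\theta$, expand $\phi(z,n) = \alpha(z) c(z,n) + \beta(z) s(z,n)$ with real entire $\alpha,\beta$ having no common zeros. Since the ring $\OO(\C)$ of entire functions is a B\'ezout domain, there exist real entire $\gamma,\delta$ with $a(n_0)\bigl(\gamma(z)\beta(z) - \delta(z)\alpha(z)\bigr) \equiv 1$; then $\theta(z,n) := \gamma(z) c(z,n) + \delta(z) s(z,n)$ is real entire and satisfies $W(\theta,\phi) = 1$ identically in $n$ and $z$.

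The principal obstacle is the construction of $h$ in the step (i) $\Rightarrow$ (iii): discreteness of $\sigma(H_-)$ supplies $|E_k|\to\infty$ but no quantitative growth bound on the counting function, so a Hadamard product of finite genus may be unavailable, and one must invoke the general Weierstrass theorem to build a real entire function with prescribed simple real zeros $\{E_k\}$. Everything else reduces to routine manipulations with the fundamental system $(c,s)$ and to the B\'ezout property of $\OO(\C)$, both of which are classical.
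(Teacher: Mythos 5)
Your proof is correct and follows essentially the route the paper intends: the paper omits the proof, stating that it can be carried out literally as in \cite{kst2}, and your argument (meromorphy of $m_-$ from the entire solution for (ii)$\Rightarrow$(i), cancellation of the poles of $m_-$ by a Weierstrass product for (i)$\Rightarrow$(iii), and the B\'ezout property of the ring of entire functions to produce $\theta$) is precisely that argument transplanted to the difference setting. The only cosmetic points are that the poles of $m_-$ coincide with $\sigma(H_-)$ only for the normalization point $n_0=0$ (for other $n_0$ one gets the spectrum of a shifted half-line restriction, which is equally discrete), and that the B\'ezout solution should be symmetrized to make $\gamma,\delta$ real entire; neither affects the validity.
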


Given such a system of real entire solutions $\theta(z,n)$ and $\phi(z,n)$ we define the singular Weyl function
\begin{equation}\label{defM}
M(z) = -\frac{W(\theta(z),u_+(z))}{W(\phi(z),u_+(z))}
\end{equation}
such that the solution lying in the domain of $H$ near $+\infty$ is given by
\begin{equation}
u_+(z,n)= \alpha(z) \big(\theta(z,n) + M(z) \phi(z,n)\big), \quad n\in\Z,
\end{equation}
where $\alpha(z)= - W(\phi(z),u_+(z))$.
It is immediate from the definition that the singular Weyl function $M(z)$ is analytic in $\C\backslash\R$
and satisfies $M(z)=M(z^*)^*$. Rather than $u_+(z,n)$ we will use
\begin{equation}\label{defpsi}
\psi(z,n)= \theta(z,n) + M(z) \phi(z,n), \quad n\in\Z.
\end{equation}
Following literally the argument in \cite[Lem.~3.2]{kst2}, one infers that associated with $M(z)$ is a corresponding
spectral measure given by the Stieltjes--Liv\v{s}i\'{c} inversion formula
\begin{equation}\label{defrho}
\frac{1}{2} \left( \rho\big((x_0,x_1)\big) + \rho\big([x_0,x_1]\big) \right)=
\lim_{\eps\downarrow 0} \frac{1}{\pi} \int_{x_0}^{x_1} \im\big(M(x+\I\eps)\big) dx.
\end{equation}

\begin{theorem}
Define
\begin{equation}\label{eqhatf}
\hat{f}(\lam) = \lim_{n\to\infty} \sum_{m=-\infty}^n \phi(\lam,m) f(m),
\end{equation}
where the right-hand side is to be understood as a limit in $L^2(\R,d\rho)$. Then the map
\begin{equation}
U: \lz \to L^2(\R,d\rho), \qquad f \mapsto \hat{f},
\end{equation}
is unitary with inverse given by
\begin{equation}\label{Uinv}
f(n) = \lim_{r\to\infty} \int_{-r}^r \phi(\lam,n) \hat{f}(\lam) d\rho(\lam),
\end{equation}
where again the right-hand side is to be understood as a limit in $\lz$.
Moreover, $U$ maps $H$ to multiplication with the independent variable in $L^2(\R,d\rho)$.
\end{theorem}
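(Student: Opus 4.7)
Because the construction of $\phi$, $\theta$, $M(z)$, $\psi(z,n)$ and $d\rho$ parallels the Schr\"odinger case of \cite{kst2} almost verbatim, the plan is to mirror the proof of the corresponding theorem there, adjusting only the elements that are specific to the Jacobi setting (the Wronskian, the Green's function, and summation by parts replacing integration by parts). The main steps are: (i) establish a Parseval identity on finitely supported sequences, (ii) extend $U$ by density, and (iii) identify $U^{-1}$ and check the intertwining property.

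\textbf{Step 1: Parseval on finite support.} For $f\in\lz$ with finite support the right-hand side of \eqref{eqhatf} is a finite sum of entire functions, so $\hat f(\lam)=\sum_n \phi(\lam,n)f(n)$ is entire. Combining $W(\theta,\phi)=1$ with $\psi=\theta+M\phi$ gives $W(\psi,\phi)=1$, so the Green's function of $H$ takes the form
\begin{equation*}
G(z,n,m) = \phi(z,n\wedge m)\,\psi(z,n\vee m) = \phi(z,n\wedge m)\,\theta(z,n\vee m) + M(z)\,\phi(z,n)\,\phi(z,m).
\end{equation*}
Summing against $f(n)f(m)$ gives
\begin{equation*}
\spr{f}{(H-z)^{-1}f} = G_0(z) + M(z)\,\hat f(z)^2,
\end{equation*}
where $G_0(z)$ is entire in $z$ (as a finite sum of products of entire functions). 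Now apply Stieltjes--Liv\v{s}i\'c inversion to both sides: the left-hand side yields the scalar spectral measure $\mu_f=\spr{f}{E_H(\cdot)f}$, while on the right the entire piece drops out and \eqref{defrho} together with the uniform analyticity of $\hat f$ yields $d\mu_f(\lam)=\hat f(\lam)^2\,d\rho(\lam)$. In particular $\|\hat f\|_{L^2(d\rho)}=\|f\|_{\lz}$.

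\textbf{Step 2: Extension, adjoint, and intertwining.} Finitely supported sequences are dense in $\lz$, so Step~1 extends $U$ to an isometry. For an $f\in\lz$ of general support, writing $f$ as the $\lz$-limit of its truncations and using the isometry shows that \eqref{eqhatf} holds as a limit in $L^2(\R,d\rho)$. The adjoint is computed on a dense subset (say, compactly supported continuous $g$) by interchanging sum and integral: $(U^*g)(n)=\int \phi(\lam,n) g(\lam)\,d\rho(\lam)$; then $U^*U=\id$ gives \eqref{Uinv} after passing to the $\lz$-limit. The intertwining $UH=\lam\,U$ follows by applying \eqref{eqhatf} to $\tau f$, using $\tau\phi(\lam,\cdot)=\lam\,\phi(\lam,\cdot)$, and performing a summation by parts whose boundary Wronskian terms vanish because $f\in\dom(H)$ and $\phi$ lies in the domain near $-\infty$.

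\textbf{Step 3 and main obstacle.} The hard point is surjectivity of $U$, i.e.\ showing that the isometry is onto and that $\rho$ is therefore a genuine spectralization rather than a partial one. Following the argument in \cite{kst2}, one shows that the range $\ran U$ is a closed, multiplication-invariant subspace of $L^2(\R,d\rho)$ containing the image of every $\delta_n$, namely $\phi(\cdot,n)$. If $g\in L^2(d\rho)$ were orthogonal to all $\phi(\cdot,n)$, then $U^*g=0$; combined with the resolvent identity of Step~1 applied to $f=\delta_n$ (which identifies $d\rho$ as precisely the spectral measure attached to the entire family $\{\phi(\cdot,n)\}$, under Hypothesis~\ref{hyp:gen}), this forces $g=0$ $\rho$-a.e. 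The subtlety compared to the half-line case is that no single $\delta_n$ is cyclic for $H$ on $\lz$; what plays the role of cyclic vector here is the whole family $\phi(\cdot,n)$, whose very existence is guaranteed by Hypothesis~\ref{hyp:gen} via Lemma~\ref{lem:pt}.
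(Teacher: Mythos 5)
Your proposal follows essentially the same route as the paper, which omits this proof entirely and refers to the corresponding argument in \cite{kst2}; your three steps (Parseval via the Green's function and Stieltjes--Liv\v{s}i\'{c} inversion on finitely supported sequences, extension by density, surjectivity via the range of the resolvent) are precisely that argument transplanted to the Jacobi setting. The only point worth making explicit in Step 3 is the mechanism forcing $g=0$: orthogonality to $\ran U$ gives $g\perp \phi(\,\cdot\,,n)(\,\cdot\,-z)^{-1}$ for all $z\in\C\setminus\R$, Stieltjes inversion then yields $\phi(\,\cdot\,,n)g\,d\rho=0$ for every $n$, and one concludes because the functions $\phi(\lam,n)$, $n\in\Z$, have no common real zero (since $\phi(\lam,\cdot\,)$ is a nontrivial solution of a three-term recurrence, $\phi(\lam,n)$ and $\phi(\lam,n+1)$ cannot vanish simultaneously).
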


\begin{corollary}\label{corsup}
The following sets
\begin{align} \nn
\Sigma_{ac} &= \{\lam\in\R \,|\, 0<\limsup_{\eps\downarrow 0} \im(M(\lam+\I\eps)) < \infty\},\\
\Sigma_s &= \{\lam\in\R \,| \limsup_{\eps\downarrow 0}\im(M(\lam+\I\eps)) = \infty\},\\ \nn
\Sigma_p &= \{\lam\in\R \,| \lim_{\eps\downarrow 0} \eps\im(M(\lam+\I\eps))>0 \},\\
\Sigma &= \Sigma_{ac} \cup \Sigma_s = \{\lam\in\R \,|\, 0<\limsup_{\eps\downarrow 0} \im(M(\lam+\I\eps))\}
\end{align}
are minimal supports for $\rho_{ac}$, $\rho_s$, $\rho_{pp}$, and $\rho$, respectively.
In fact, we could even restrict ourselves to values of $\lam$, where the $\limsup$ is a $\lim$ (finite or infinite).

Moreover, the spectrum of $H$ is given by the closure of $\Sigma$,
\begin{equation}
\sig(H) = \ol{\Sigma},
\end{equation}
the set of eigenvalues is given by
\begin{equation}
\sig_p(H) = \Sigma_p,
\end{equation}
and the absolutely continuous spectrum of $H$ is given by the essential closure
of $\Sigma_{ac}$,
\begin{equation}
\sig(H_{ac}) = \ol{\Sigma}_{ac}^{ess}.
\end{equation}
Hereby recall $ \ol{\Omega}^{ess} = \{ \lam\in\R \,|\, |(\lam-\eps,\lam+\eps)\cap \Omega|>0
\mbox{ for all } \eps>0\}$, where $|\Omega|$ denotes the Lebesgue measure of a Borel set $\Omega$.
\end{corollary}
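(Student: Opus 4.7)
The statement naturally splits into two parts: first, identifying $\Sigma_{ac}$, $\Sigma_s$, $\Sigma_p$, and $\Sigma$ as minimal supports of the corresponding components of $d\rho$; and second, translating these into the three spectral identities for $H$ via the unitary $U$ of the preceding theorem.

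The second step is routine. Since $U$ realizes $H$ as multiplication by the independent variable on $L^2(\R,d\rho)$, standard spectral theory gives that $\sig(H)$ equals the topological support of $d\rho$, $\sig_p(H)$ equals the set of atoms of $\rho$, and $\sig(H_{ac})$ equals the essential closure of any minimal support of $d\rho_{ac}$. Combined with the minimal-support identifications from the first step, the three displayed spectral formulas follow at once.

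For the first step, the main difficulty is that $M(z)$ need not itself be a Herglotz function, so one cannot invoke Aronszajn--Donoghue boundary-value theory directly. The plan is a local reduction. On any bounded open interval $I\subset\R$, unitarity of $U$ makes $\chi_I\,d\rho$ a finite measure and its Borel transform $F_I(z)=\int_I(x-z)^{-1}d\rho(x)$ a genuine Herglotz function. The difference $M-F_I$ is holomorphic on $\C\setminus\R$, obeys the reflection symmetry, and by the Stieltjes--Livsic formula \eqref{defrho} has boundary imaginary part vanishing weakly on $I$; a standard reflection argument then extends it holomorphically across $I$. Consequently both $\im(M-F_I)(\lam+\I\eps)\to 0$ and $\eps\,\im(M-F_I)(\lam+\I\eps)\to 0$ as $\eps\downarrow 0$ for every $\lam\in I$, so each of the four sets in the statement, intersected with $I$, coincides with its analogue built from the Herglotz function $F_I$.

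For $F_I$, the minimal-support claims are classical: Fatou's theorem identifies $\Sigma_{ac}\cap I$ as a minimal support of $\rho_{ac}|_I$ with density $\pi^{-1}\lim\im F_I$; the identity $\rho(\{\lam\})=\lim_{\eps\downarrow 0}\eps\,\im F_I(\lam+\I\eps)$ identifies $\Sigma_p\cap I$ as a minimal support of $\rho_{pp}|_I$; and the Aronszajn--Donoghue characterization of the singular part handles $\Sigma_s\cap I$, with $\Sigma\cap I$ following by union. Since $I$ was arbitrary, the minimal-support claims hold on all of $\R$. The remark about replacing $\limsup$ by $\lim$ just records the existence of boundary values $d\rho$-almost everywhere. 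The main obstacle is therefore organizing the local-to-global reduction cleanly via the Herglotz subtraction; once in place, the remainder follows the verbatim template of \cite{kst2}.
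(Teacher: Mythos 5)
Your overall strategy is the intended one: the paper gives no separate proof of this corollary but defers to \cite{kst2}, where the argument is exactly a reduction to classical Herglotz boundary-value theory (Fatou's theorem, the atom formula $\rho(\{\lam\})=\lim_\eps \eps\im(\cdot)$, and the Aronszajn--Donoghue/de la Vall\'ee-Poussin decomposition) followed by the transfer to $H$ through the unitary $U$. Your only real deviation is that you localize, subtracting the Borel transform $F_I$ of $\chi_I\,d\rho$, instead of renormalizing globally as in Remark~\ref{rem:herg}, where $\ti M(z)=\E^{-2g(z)}M(z)+\E^{-g(z)}f(z)$ is already Herglotz and $d\ti\rho=\E^{-2g}d\rho$ is mutually absolutely continuous with $d\rho$ with continuous positive density. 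Both routes work, and the local one is arguably cleaner once it is set up correctly.

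The step you need to repair is the assertion that $M-F_I$ extends holomorphically across $I$ because its boundary imaginary part ``vanishes weakly'' by \eqref{defrho} together with ``a standard reflection argument''. Schwarz reflection requires control of the function itself near $I$ (continuity up to the boundary, or at least a local integrable bound on $|M-F_I|$ on horizontal segments); the vanishing of $\lim_{\eps}\int_{x_0}^{x_1}\im(\cdot)\,dx$ alone does not yield an extension for a general symmetric holomorphic function, and at this point you have no a priori bound on $M$ near $\R$. The clean fix uses the integral representation \eqref{Mir} of Theorem~\ref{IntR}: split the integral into the parts over $I$ and over $\R\setminus I$; the latter part and the $\lam/(1+\lam^2)$ term are holomorphic near $I$, while
\[
\hat g(z)\int_I\frac{d\rho(\lam)}{(\lam-z)\hat g(\lam)}-F_I(z)=\int_I\frac{\hat g(z)-\hat g(\lam)}{\lam-z}\,\frac{d\rho(\lam)}{\hat g(\lam)}
\]
is entire, since the integrand is entire in $z$ and locally bounded uniformly in $\lam\in \ol{I}$ and $\rho(I)<\infty$. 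Hence $M-F_I$ is holomorphic and real on $I$, and your estimates $\im(M-F_I)(\lam+\I\eps)=\OO(\eps)$ follow. A second, smaller point: $\sig(H)=\ol\Sigma$ does not follow from the minimal-support property alone, because the closure of a minimal support may strictly contain $\supp(\rho)$; you also need the converse inclusion $\Sigma\subseteq\supp(\rho)$, which the same local argument supplies, since $\rho((\lam-\delta,\lam+\delta))=0$ forces $M$ to be real-analytic across $(\lam-\delta,\lam+\delta)$ and hence $\im M(\lam+\I\eps)\to0$ there.
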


\begin{lemma}
We have
\begin{equation}
(U \partial_z^k G(z,n,\cdot\,))(\lam) = \frac{k! \phi(\lam,n)}{(\lam-z)^{k+1}}
\end{equation}
for every $k\in\N_0$, and every $z\in\C\setminus\sig(H)$, where $G(z,n,m)=\spr{\delta_n}{(H-z)^{-1}\delta_m}$ is the
Green's function of $H$.
\end{lemma}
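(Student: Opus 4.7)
My plan is to reduce the identity to the two basic properties of the transformation $U$: that $U$ diagonalizes $H$ and that $U\delta_n$ is the generalized eigenfunction $\phi(\cdot,n)$.

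First I would compute $U\delta_n$. Since $\delta_n \in \lz$ has compact support, the limit in \eqref{eqhatf} is actually a finite sum (in fact a single term), and one obtains
\begin{equation*}
(U\delta_n)(\lam) = \phi(\lam,n).
\end{equation*}
Next I would use the self-adjointness of $H$ together with $G(z,n,m)=G(z,m,n)$ to identify $G(z,n,\cdot)$, viewed as a function of the second argument, with $(H-z)^{-1}\delta_n \in \lz$ for $z \in \C\setminus\sigma(H)$. In particular, $G(z,n,\cdot)$ is a bona fide $\ell^2$-vector to which $U$ may be applied.

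Having done this, the case $k=0$ follows immediately: since $U$ intertwines $H$ with multiplication by $\lam$, one has $U(H-z)^{-1} = (\lam-z)^{-1} U$ as operators, so
\begin{equation*}
(UG(z,n,\cdot))(\lam) = \frac{(U\delta_n)(\lam)}{\lam-z} = \frac{\phi(\lam,n)}{\lam-z}.
\end{equation*}
For general $k$, the functional-calculus identity $\frac{1}{k!}\partial_z^k (H-z)^{-1} = (H-z)^{-k-1}$ (valid in operator norm for $z\in\C\setminus\sigma(H)$) gives
\begin{equation*}
\partial_z^k G(z,n,\cdot) = k!\,(H-z)^{-k-1}\delta_n
\end{equation*}
as an identity in $\lz$, and applying $U$ yields the claim.

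The only point that requires care is the interchange of $\partial_z^k$ with $U$ and with the inner product defining $G$. Since $z \mapsto (H-z)^{-1}$ is analytic in operator norm on $\C\setminus\sigma(H)$, the map $z\mapsto (H-z)^{-1}\delta_n$ is $\lz$-analytic with derivatives given by the resolvent powers above, and boundedness of $U$ transports this analyticity to $L^2(\R,d\rho)$ with derivatives commuting with $U$. This is the only technical step; otherwise the proof is a direct chain of identifications, exactly as in \cite[Lem.~3.3]{kst2}.
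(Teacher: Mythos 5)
Your argument is correct and is essentially the proof the paper has in mind: the paper omits it, noting that it follows literally from the corresponding lemma in \cite{kst2}, and that argument is exactly the chain you give — identify $G(z,n,\cdot)$ with $(H-z)^{-1}\delta_n$ via symmetry of the Green's function, use $U\delta_n=\phi(\cdot,n)$ and the intertwining $U(H-z)^{-1}=(\lam-z)^{-1}U$, and then differentiate the resolvent in norm to handle $k\ge1$. Your attention to the interchange of $\partial_z^k$ with $U$ via norm-analyticity of the resolvent is the right (and only) technical point.
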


\begin{remark}\label{rem:uniq}
It is important to point out that a fundamental system $\theta(z,n)$, $\phi(z,n)$ of solutions is not unique and any other such system is given by
\[
\ti{\theta}(z,n) = \E^{-g(z)} \theta(z,n) - f(z) \phi(z,n), \qquad
\ti{\phi}(z,n) = \E^{g(z)} \phi(z,n),
\]
where $g(z)$, $f(z)$ are entire functions with $f(z)$ real and $g(z)$ real modulo $\I\pi$.
The singular Weyl functions are related via
\[
\ti{M}(z) = \E^{-2g(z)} M(z) + \E^{-g(z)}f(z)
\]
and the corresponding spectral measure is given by
\[
d\ti{\rho}(\lam) = \E^{-2g(\lam)} d\rho(\lam).
\]
Hence the two measures are mutually absolutely continuous and the associated spectral
transformations just differ by a simple rescaling with the positive function $\E^{-2g(\lam)}$.
\end{remark}

Next, the following integral representation shows that $M(z)$ can be reconstructed from $\rho$ up to an entire function.

\begin{theorem}[\cite{kst2}]\label{IntR}
Let $M(z)$ be a singular Weyl function and $\rho$ its associated spectral measure. Then there exists
an entire function $g(z)$ such that $g(\lam)\ge 0$ for $\lam\in\R$ and $\E^{-g(\lam)}\in L^2(\R, d\rho)$.

Moreover, for any entire function $\hat{g}(z)$ such that $\hat{g}(\lam)>0$ for $\lam\in\R$ and $(1+\lam^2)^{-1} \hat{g}(\lam)^{-1}\in L^1(\R, d\rho)$
(e.g.\ $\hat{g}(z)=\E^{2g(z)}$) we have the integral representation
\begin{equation}\label{Mir}
M(z) = E(z) + \hat{g}(z) \int_\R \left(\frac{1}{\lam-z} - \frac{\lam}{1+\lam^2}\right) \frac{d\rho(\lam)}{\hat{g}(\lam)},
\qquad z\in\C\backslash\sig(H),
\end{equation}
where $E(z)$ is a real entire function.
\end{theorem}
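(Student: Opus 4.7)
The plan is to treat the two assertions separately, following the Schr\"odinger template of \cite{kst2} with the entire system $\theta(z,n),\phi(z,n)$ of Lemma~\ref{lem:pt} playing the role of the corresponding Schr\"odinger solutions.

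For the existence of $g$, the starting point is the unitarity of $U$ established in the preceding theorem: for every $n\in\Z$ the entire function $\lam\mapsto\phi(\lam,n)$ is the image of $\delta_n\in\lz$ and hence lies in $L^2(\R,d\rho)$. Fixing $n_0\in\Z$ and observing that $\phi(\lam,n_0)$ and $\phi(\lam,n_0+1)$ cannot simultaneously vanish (by the three--term recurrence this would force $\phi(\lam,\cdot)\equiv 0$), the real entire function $h(\lam):=\phi(\lam,n_0)^2+\phi(\lam,n_0+1)^2$ is strictly positive on $\R$ and satisfies $\int_\R h\,d\rho=2$. I would then invoke a Hadamard--product construction to produce an entire real function $g$, of suitable order to dominate $h^{-1/2}$ on $\R$, with $g\geq 0$ on $\R$ and $e^{-g}\in L^2(\R,d\rho)$.

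For the integral representation, fix $\hat g$ as in the statement and set
\[
F(z):=\hat g(z)\int_{\R}\Bigl(\frac{1}{\lam-z}-\frac{\lam}{1+\lam^2}\Bigr)\frac{d\rho(\lam)}{\hat g(\lam)},\qquad z\in\C\backslash\R.
\]
Since the integrand is dominated by a constant multiple of $(1+\lam^2)^{-1}\hat g(\lam)^{-1}\in L^1(d\rho)$, $F$ is well defined and analytic off $\R$. To conclude I would show that $N:=M-F$ extends to an entire function $E$. Analyticity of $N$ on $\C\backslash\sig(H)$ is clear, so only the real axis needs attention. By \eqref{defrho} the Stieltjes--Liv\v{s}i\'{c} inversion of $M$ recovers $\rho$; for $F$, continuity of $\hat g$ across $\R$ together with reality of $\hat g|_\R$ gives, via a direct Plemelj computation, that the Stieltjes--Liv\v{s}i\'{c} measure of $F$ equals $\hat g(\lam)\cdot\hat g(\lam)^{-1}d\rho(\lam)=d\rho(\lam)$ as well. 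Hence $M$ and $F$ share the same distributional jump across $\R$; in particular their residues at the eigenvalues of $H$ cancel, and a standard Morera/Schwarz--reflection argument provides the entire extension $E$. Finally, the symmetries $M(z^*)=M(z)^*$ and $F(z^*)=F(z)^*$ force $E$ to be real on $\R$.

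The main obstacle I anticipate is the first step. The measure--theoretic fact $\phi(\cdot,n)\in L^2(d\rho)$ does not of itself furnish an entire dominant, so one must quantitatively control the order of $\phi(z,n)$ as an entire function of $z$ (through the three--term recurrence and Hypothesis~\ref{hyp:gen}) and then organize the Hadamard product so that the resulting $g$ is simultaneously entire, real, and nonnegative on $\R$. Once $g$ is in hand, part (ii) is essentially a Herglotz/Nevanlinna bookkeeping exercise with the weight $\hat g$.
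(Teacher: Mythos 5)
Your treatment of the first assertion is essentially the right one (and is the one used in \cite{kst2}): since $\phi(\,\cdot\,,n_0),\phi(\,\cdot\,,n_0+1)\in L^2(\R,d\rho)$ and have no common real zero, it suffices to find an entire $g$ with $g\ge\max\bigl(0,-\tfrac12\ln h\bigr)$ on $\R$, and the existence of an entire, nonnegative majorant of a continuous function on $\R$ is a classical fact (a lacunary even power series $\sum_k c_k z^{2k}$ does it; a Hadamard product is not really the relevant device, but that is cosmetic). You flag this step as the main obstacle; it is actually the harmless one.

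The genuine gap is in the second assertion. From the fact that $M$ and $F$ have the same Stieltjes--Liv\v{s}i\'{c} boundary measure you cannot conclude that $N=M-F$ extends analytically across $\R$. Formula \eqref{defrho} only controls $\lim_{\eps\downarrow0}\int_{x_0}^{x_1}\im N(x+\I\eps)\,dx$; it gives no information on $\re N$, no non-tangential control, and above all no local boundedness or even local integrability of $N$ near points of $\sig(H)$ --- which is exactly what a Morera/Schwarz-reflection argument requires. A priori $M$ could have an essential-type singularity at a point of the essential spectrum whose ``jump'' still integrates to the correct measure; ruling this out is the whole content of the theorem, so the step cannot be waved through. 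The missing ingredient is the resolvent identity obtained from the unitarity of $U$ applied to $\delta_n$,
\begin{equation*}
\int_\R\frac{\phi(\lam,n)^2}{\lam-z}\,d\rho(\lam)=\spr{\delta_n}{(H-z)^{-1}\delta_n}=\phi(z,n)\bigl(\theta(z,n)+M(z)\phi(z,n)\bigr),
\end{equation*}
which represents $M(z)\phi(z,n)^2$ as an honest Herglotz-type integral plus an entire function. Splitting the integrals over a bounded interval $I$ and its complement and using that $\bigl(w(\lam)-w(z)\bigr)/(\lam-z)$ is jointly analytic and locally bounded for $w=\phi(\,\cdot\,,n)^2$ and $w=\hat g^{-1}$ (note $\hat g\neq0$ near $\R$), one finds that near every interior point of $I$ with $\phi(z,n)\neq0$ both $M(z)$ and $F(z)$ equal $\int_I\frac{d\rho(\lam)}{\lam-z}$ plus a function analytic there; hence $M-F$ continues analytically, and the real zeros of $\phi(\,\cdot\,,n)$ are handled by switching to $n+1$. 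Your jump/residue argument should be replaced by this local computation, which is the route taken in \cite{kst2}.
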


\begin{remark}\label{rem:herg}
Choosing a real entire function $g(z)$ such that $\exp(-2g(\lam)) \in L^1(\R, d\rho)$ we see that
\begin{equation}
M(z) = \E^{2g(z)} \int_\R \frac{1}{\lam-z} \E^{-2g(\lam)}d\rho(\lam) - E(z)
\end{equation}
for some real entire function $E(z)$. 
Hence if we choose $f(z) = \exp(-g(z)) E(z)$ and switch to a new system of solutions as in Remark~\ref{rem:uniq},
we see that the new singular Weyl function is a Herglotz--Nevanlinna function
\begin{equation}
\ti{M}(z) = \int_\R \frac{1}{\lam-z} \E^{-2g(\lam)}d\rho(\lam).
\end{equation}
\end{remark}

As a final ingredient we will need the following simple lemma on high energy asymptotics of our real entire solution $\phi(z,n)$.

\begin{lemma}\label{lemPhiAs}
If $\phi(z,n)$ is a real entire solution which lies in the domain of $H$ near $-\infty$, then for every $n$, $\tilde{n}\in\Z$
\begin{equation}\label{IOphias}
\phi(z,n) = \phi(z,\tilde{n}) \begin{cases}
 z^{n-\tilde{n}} \prod\limits_{m=\tilde{n}}^{n-1}a(m)^{-1} (1+ \OO(z^{-1})), & n\ge \tilde{n}\\
z^{n-\tilde{n}}\prod\limits_{m=n}^{\tilde{n}-1}a(m) (1+ \OO(z^{-1})),& n<\tilde{n}
\end{cases}
\end{equation}
as $|z|\to\infty$ along any non-real ray.
\end{lemma}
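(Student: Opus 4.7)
The plan is to reduce the claim to the one-step asymptotic
\[
\frac{\phi(z,n+1)}{\phi(z,n)} = \frac{z}{a(n)}\bigl(1 + \OO(z^{-1})\bigr), \qquad n \in \Z,
\]
valid as $|z|\to\infty$ along any non-real ray. Granted this, the statement for arbitrary $n,\tilde{n}$ follows by telescoping: for $n \geq \tilde{n}$ the product $\phi(z,n)/\phi(z,\tilde{n}) = \prod_{m=\tilde{n}}^{n-1}\phi(z,m+1)/\phi(z,m)$ collapses to $z^{n-\tilde{n}}\prod_{m=\tilde{n}}^{n-1}a(m)^{-1}(1+\OO(z^{-1}))$, since a finite product of factors of the form $1+\OO(z^{-1})$ retains that form, and for $n<\tilde{n}$ one inverts to use $\phi(z,m)/\phi(z,m+1) = (a(m)/z)(1+\OO(z^{-1}))$.

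For the one-step asymptotic, I would exploit that for every $z\in\C\setminus\R$ the entire solution $\phi(z,\cdot)$ is proportional to the Weyl solution $u_-(z,\cdot)\in\ell^2_-(\Z)$---both span the one-dimensional space of solutions lying in the domain near $-\infty$---so that the ratio $\phi(z,n+1)/\phi(z,n) = u_-(z,n+1)/u_-(z,n)$ is invariant under the entire scalar between them. Using the standard Green's-function representation $G(z,k,\ell) = u_-(z,\min(k,\ell))\,u_+(z,\max(k,\ell))/W(u_+,u_-)$, this ratio equals $G(z,n+1,n+1)/G(z,n+1,n)$, reducing the problem to the high-energy behavior of fixed matrix elements of the resolvent. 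From the exact identity
\[
(H-z)^{-1} = -z^{-1}\sum_{k=0}^{K-1}(H/z)^k + z^{-K}H^K(H-z)^{-1}
\]
with $K=2$, together with the bound $\|(H-z)^{-1}\| \leq 1/|\im z|$ and the fact that $H^K\delta_\ell$ has finite support (so its norm is a fixed constant), one reads off $G(z,n+1,n+1) = -1/z + \OO(z^{-2})$ and $G(z,n+1,n) = -a(n)/z^2 + \OO(z^{-3})$; their quotient is $(a(n)/z)(1+\OO(z^{-1}))$, yielding the desired one-step formula upon inversion.

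The main obstacle is ensuring the remainder estimate is uniform along a non-real ray; this is where the non-real hypothesis enters, since on such a ray $|\im z|/|z|$ is bounded below by a positive constant, converting the operator-norm bound $1/|\im z|$ into $\OO(1/|z|)$ uniformly in $z$. All remaining pieces---the identification $\phi\propto u_-$ (immediate from both solutions spanning the domain near $-\infty$), the Green's-function identity, and the collapse of a product of $|n-\tilde{n}|$ factors each of the form $1+\OO(z^{-1})$---are routine bookkeeping, consistent with the authors' characterisation of the lemma as ``simple''.
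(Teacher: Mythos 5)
Your proposal is correct and takes essentially the same route as the paper: the published proof consists precisely of the one-step asymptotic $m_-(z,n)=-\phi(z,n-1)/(a(n-1)\phi(z,n))=-\tfrac{1}{z}+\OO(z^{-2})$ along non-real rays followed by induction (your telescoping). The only difference is that the paper cites this one-step input from \cite[Lemma~6.6]{tjac}, whereas you rederive it via the identification $\phi\propto u_-$, the Green's function formula, and the Neumann expansion of the resolvent with the $|\im z|^{-1}$ bound; that derivation is sound and self-contained.
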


\begin{proof}
This follows from induction using \cite[Lemma~6.6]{tjac},
\[
m_-(z,n)= -\frac{\phi(z,n-1)}{a(n-1)\phi(z,n)} = -\frac{1}{z} + \OO(z^{-2})
\]
as $|z|\rightarrow\infty$ along non-real rays. 
\end{proof}

\section{Exponential growth rates}
\label{sec:egr}

It turns out that the real entire fundamental system $\theta(z,n)$, $\phi(z,n)$ from Section~\ref{sec:swm} is not sufficient for the proofs of our inverse uniqueness results. 
To this end we will need information on the growth order of the functions $\theta(\,\cdot\,,n)$ and $\phi(\,\cdot\,,n)$.
Our presentation in this section will closely follow \cite[Section~3]{et}.

We will say a real entire solution $\phi(z,n)$ is of growth order at most $s\geq 0$ if the entire functions $\phi(\,\cdot\,,n)$ and $\phi(\,\cdot\,,n+1)$ are of growth order at most $s$ for 
one (and hence for all) $n\in\Z$.
 Our first aim is to extend Lemma~\ref{lem:pt} and to establish the connection between the growth order of $\phi(z,n)$ and the convergence exponent
of the spectrum. We begin by recalling some basic notation and refer to the classical book by Levin \cite{lev} for proofs and further
background.

Given some discrete set $S\subseteq \C$, the number
\begin{equation}
 \inf\biggr\lbrace s\geq0 \,\biggr|\, \sum_{\mu\in S} \frac{1}{1+|\mu|^s}<\infty \biggr\rbrace \in [0,\infty],
\end{equation}
is called the convergence exponent of $S$. Moreover, the smallest integer $p\in\N_0$ for which
\begin{equation}
 \sum_{\mu\in S} \frac{1}{1+|\mu|^{p+1}}<\infty
\end{equation}
will be called the genus of $S$. Introducing the elementary factors
\begin{equation}
 E_p(\zeta,z) = \left(1-\frac{z}{\zeta}\right) \exp\left(\sum_{k=1}^p\frac{1}{k} \frac{z^k}{\zeta^k}\right), \quad z\in\C,
\end{equation}
if $\zeta\not=0$ and $E_p(0,z)=z$, we recall that the product $\prod_{\mu\in S} E_p(\mu,z)$
converges uniformly on compact sets to an entire function of growth order $s$, where $s$ and $p$ are the
convergence exponent and genus of $S$, respectively.

\begin{theorem}\label{thm:IOphiev}
For each $s\geq 0$ the following properties are equivalent:
\begin{enumerate}
\item The spectrum of $H_-$ is purely discrete and has convergence exponent at most $s$.
\item There  is a real entire solution $\phi(z,n)$ of growth order at most $s$ which is non-trivial and lies in the domain of $H$ near $-\infty$ for each $z\in\C$.
\end{enumerate} 
\end{theorem}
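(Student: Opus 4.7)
The plan is to refine Lemma~\ref{lem:pt} by tracking growth orders. The direction (ii)$\Rightarrow$(i) reduces to a Hadamard-type bound: if $\phi(z,n)$ is real entire of growth order at most $s$ and lies in the domain near $-\infty$, then $\phi(\,\cdot\,,0)$ is entire of order at most $s$, its zeros coincide with $\sigma(H_-)$ (since $\phi(z_0,\,\cdot\,)|_{n\leq -1}$ is the eigenfunction at $z_0$ whenever $\phi(z_0,0)=0$, and conversely the space of solutions in the domain at $-\infty$ is one-dimensional), and these zeros are simple. The standard bound on the convergence exponent of zeros of an entire function of order at most $s$ then gives~(i).

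For (i)$\Rightarrow$(ii), fix via Lemma~\ref{lem:pt} some real entire solution $\phi_0(z,n)$ in the domain near $-\infty$. Let $\{\mu_k\}$ be the eigenvalues of $H_-$ and $p\leq s$ their genus, and form the canonical product $\Pi(z):=\prod_k E_p(\mu_k,z)$, which is real entire of order at most $s$. By the above, $\phi_0(\,\cdot\,,0)$ and $\Pi$ share the same simple zeros, so $\phi_0(z,0)/\Pi(z)=C\,\E^{g(z)}$ for some real entire $g$ and nonzero real $C$. Remark~\ref{rem:uniq} lets one replace $\phi_0$ by $\phi(z,n):=C^{-1}\E^{-g(z)}\phi_0(z,n)$, still a real entire solution in the domain near $-\infty$, which now satisfies $\phi(z,0)=\Pi(z)$. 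Since the Jacobi recurrence expresses every $\phi(\,\cdot\,,n)$ as a polynomial (in $z$) combination of $\phi(\,\cdot\,,0)$ and $\phi(\,\cdot\,,-1)$, it only remains to show that $\phi(\,\cdot\,,-1)$ is of order at most $s$.

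For this I would introduce $m_-(z):=-\phi(z,-1)/(a(-1)\phi(z,0))$, which is Herglotz--Nevanlinna on $\C\setminus\R$ with poles precisely at the $\mu_k$; hence $\phi(z,-1)=-a(-1)\,m_-(z)\Pi(z)$ is entire. The Herglotz bound $|m_-(z)|\leq\im m_-(\I)/|\im z|$ combined with $|\Pi(z)|\leq\exp(C_\eps|z|^{s+\eps})$ controls $m_-\Pi$ away from the real axis, where the bulk of the estimate lies. The main technical obstacle is extending this order bound across a neighborhood of $\R$, where neither factor is individually well-behaved although their product is entire; this is handled either by a Phragm\'en--Lindel\"of argument in horizontal strips, or alternatively by expanding $m_-$ via its Herglotz representation and recognizing each resulting term $\Pi(z)/(z-\mu_k)$ as the canonical product $\Pi$ with one elementary factor removed (hence entire of order at most $s$). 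This parallels the Schr\"odinger-case treatment in \cite[Section~3]{et} and adapts without essential change.
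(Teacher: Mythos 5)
Your direction (ii)\,$\Rightarrow$\,(i) is the paper's argument verbatim: the zeros of $\phi(\,\cdot\,,0)$ are $\sigma(H_-)$, and an entire function of order at most $s$ has zeros of convergence exponent at most $s$. For (i)\,$\Rightarrow$\,(ii) you take a genuinely different route. The paper does not start from an arbitrary solution and renormalize; it works with the two interlacing spectra $\sigma(H_-)=\{\mu_j\}$ and $\sigma(H_-')=\{\nu_{j-1}\}$ (Dirichlet conditions at $0$ and at $1$), forms the two canonical products $\alpha=\prod E_p(\nu_{j-1},\cdot)$ and $\ti\beta=\prod E_p(\mu_j,\cdot)$, and invokes Krein's theorem \eqref{eqKrein} to identify $m_-(z,1)$ with $C\,\ti\beta/\alpha$ up to an explicit polynomial exponential of degree at most $p$ (whose coefficients converge absolutely precisely because of the interlacing). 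This yields both initial values $\phi(z,1)=\alpha(z)$ and $\phi(z,0)=\beta(z)$ of order at most $s$ at once and sidesteps any estimate of a Herglotz function near the real axis. Your approach instead normalizes $\phi(\,\cdot\,,0)$ to be the canonical product $\Pi$ over $\sigma(H_-)$ and must then control $\phi(\,\cdot\,,-1)=-a(-1)m_-\Pi$; what you gain is that you never need Krein's theorem or the second spectrum, only the finiteness of the Herglotz measure of $m_-$ (which, unlike the Schr\"odinger case, is automatic here since $m_-(z)=\sum_k c_k(\mu_k-z)^{-1}$ with $\sum_k c_k=1$).

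The one step whose justification as written does not suffice is the passage across the real axis. The Phragm\'en--Lindel\"of alternative in horizontal strips requires an a priori growth bound on the entire function $m_-\Pi$ \emph{inside} the strip, which you do not have at that stage. And in the series alternative, the observation that each term $\Pi(z)/(z-\mu_k)$ is entire of order at most $s$ is not enough: an infinite sum of functions each of order at most $s$ need not have order at most $s$. What closes the argument is a bound \emph{uniform in} $k$, namely $|\Pi(z)/(z-\mu_k)|\leq \E^{C_\eps(1+|z|)^{s+\eps}}$ for all $k$ and $z$ --- obtained from $|\Pi(z)/(z-\mu_k)|\le|\Pi(z)|$ off the unit disc around $\mu_k$ and the maximum principle on that disc --- combined with $\sum_k c_k<\infty$. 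With that insertion your proof is complete; note also that you implicitly use simplicity of the zeros of $\phi_0(\,\cdot\,,0)$ to write $\phi_0(\,\cdot\,,0)/\Pi=C\E^{g}$ with $g$ entire, which is standard (the derivative at an eigenvalue is a nonzero multiple of the norming constant) but should be said.
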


\begin{proof}
Suppose the spectrum of $H_{-}$ is purely discrete and has convergence exponent at most $s$. The same then
holds true for the spectrum of the operator $H_{-}'$ which is obtained by restricting $H$ to $-\N_0$ with a Dirichlet boundary condition at $1$.
 The spectra of these operators will be denoted with 
\begin{equation}
\sig(H_{-}) = \{ \mu_j \}_{j\in N} \quad\text{and}\quad \sig(H_{-}') = \{ \nu_{j-1} \}_{j\in N},
\end{equation}
where the index set $N$ is either $\N$ or $\Z$. Note that the eigenvalues $\mu_j$, $\nu_{j-1}$, $j\in N$ are precisely the zeros
of $\phi(\,\cdot\,,0)$ and $\phi(\,\cdot\,,1)$, respectively. Also recall that both spectra are interlacing
\begin{equation}\label{IOinterlacing}
\nu_{j-1} < \mu_j < \nu_j, \qquad j\in N,
\end{equation}
and that Krein's theorem \cite[Thm.~27.2.1]{lev} states
\begin{equation}\label{eqKrein}
m_-(z,1) = C \prod_{j\in N} \frac{E_0(\mu_j,z)}{E_0(\nu_{j-1},z)}
\end{equation}
for some real constant $C\not=0$. 
 Now consider the real entire functions
\[
\alpha(z) = \prod_{j\in N} E_p\left(\nu_{j-1},z\right) \quad\text{and}\quad \ti{\beta}(z) = \prod_{j\in N} E_p\left(\mu_{j},z\right),
\]
where $p\in\N_0$ is the genus of these sequences. 
Then $\alpha(z)$ and $\ti{\beta}(z)$ are of growth order at most $s$ by Borel's theorem (see \cite[Thm.~4.3.3]{lev}).
Next note that $m_-(z,1) = \E^{h(z)} \ti{\beta}(z) \alpha(z)^{-1}$ for some entire function $h(z)$ since the right-hand side
has the same poles and zeros as $m_-(z,1)$. Comparing this with Krein's formula
\eqref{eqKrein} we obtain that $h(z)$ is in fact a polynomial of degree at most $p$:
\[
h(z) = \sum_{k=1}^p \frac{z^k}{k} \sum_{j\in N} \left( \frac{1}{\nu_{j-1}^k} - \frac{1}{\mu_j^k}\right) +\ln(C), \quad z\in\C,
\]
where the sums converge absolutely by our interlacing assumption.
In particular, $\beta(z) = -a(0) m_-(z,1)\alpha(z)= - a(0) \E^{h(z)} \ti{\beta}(z)$ is of growth order at most $s$ as well.
Hence the solutions $\phi(z,n)$ with $\phi(z,1) = \alpha(z)$ and $\phi(z,0) = \beta(z)$, $z\in\C$ lie
in the domain of $H$ near $-\infty$ and are of growth order at most $s$. 

Conversely let $\phi(z,n)$ be a real entire solution of growth order at most $s$ which lies in the domain of $H$ near $-\infty$.
Then since $m_-(z,0) = -\phi(z,-1) \phi(z,0)^{-1}a(-1)^{-1}$, the spectrum of $H_-$ is purely discrete and coincides with the zeros of $\phi(\,\cdot\,,0)$.
Now since $\phi(\,\cdot\,,0)$ is of growth order at most $s$, its zeros are of convergence exponent at most $s$.
\end{proof}

Given a real entire solution $\phi(z,n)$ of growth order $s\geq 0$ we are not able to prove the existence of a second solution of the same growth order.
Hence we recall the following lemma which provides a criterion to ensure existence of a second solution $\theta(z,n)$ of growth order arbitrarily close to $s$.

\begin{lemma}[\cite{et}]
Suppose $\phi(z,n)$ is a real entire solution of growth order $s\geq 0$ and let $\eps>0$, $\tilde{n}\in\Z$. 
Then there is a real entire second solution $\theta(z,n)$ with 
\begin{equation}
 |\theta(z,\tilde{n})| + |\theta(z,\tilde{n}+1)| \leq B \E^{A |z|^{s+\eps}}, \quad z\in\C
\end{equation}
for some constants $A$, $B>0$ and $W(\theta,\phi)=1$ if and only if 
\begin{equation}\label{eqnSSphiboundlow}
   |\phi(z,\tilde{n})| + |\phi(z,\tilde{n}+1)| \geq b \E^{-a|z|^{s+\eps}}, \quad z\in\C
\end{equation}
for some constants $a$, $b>0$.
\end{lemma}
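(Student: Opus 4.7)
My plan is to handle the two implications separately, the forward direction being routine and the reverse one being the substantial part. For the forward direction ($\Rightarrow$), given a second solution $\theta(z,n)$ with $W(\theta,\phi)=1$ and the stated upper bound at $\tilde{n}$ and $\tilde{n}+1$, I evaluate the Wronskian at $n=\tilde{n}$:
\begin{equation*}
a(\tilde{n})\bigl(\theta(z,\tilde{n})\phi(z,\tilde{n}+1)-\theta(z,\tilde{n}+1)\phi(z,\tilde{n})\bigr)=1.
\end{equation*}
The elementary inequality $|xw-yv|\le(|x|+|y|)(|v|+|w|)$ and the assumed upper bound on $\theta$ then immediately deliver $|\phi(z,\tilde{n})|+|\phi(z,\tilde{n}+1)|\ge\tfrac{1}{a(\tilde{n})B}\,\E^{-A|z|^{s+\eps}}$, which is the claim with $a:=A$ and $b:=1/(a(\tilde{n})B)$.

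For the reverse direction ($\Leftarrow$), since a solution of $\tau u=zu$ is determined by its values at any two consecutive sites via the three-term recurrence---and multiplication by $(z-b(n))/a(n)$ raises the exponential type bound by at most a polynomial factor, which is absorbed into an $\E^{A'|z|^{s+\eps}}$ estimate---it suffices to construct real entire functions $\alpha(z)$, $\beta(z)$ satisfying $|\alpha(z)|+|\beta(z)|\le B\,\E^{A|z|^{s+\eps}}$ and the B\'ezout-type identity
\begin{equation*}
a(\tilde{n})\bigl(\alpha(z)\phi(z,\tilde{n}+1)-\beta(z)\phi(z,\tilde{n})\bigr)=1,
\end{equation*}
and then set $\theta(z,\tilde{n}):=\alpha(z)$, $\theta(z,\tilde{n}+1):=\beta(z)$. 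The minorant precludes a common zero of $\phi(\,\cdot\,,\tilde{n})$ and $\phi(\,\cdot\,,\tilde{n}+1)$, and at each zero $z_k$ of $\phi(\,\cdot\,,\tilde{n})$ the identity forces $\alpha(z_k)=1/(a(\tilde{n})\phi(z_k,\tilde{n}+1))$, whose modulus the minorant bounds by $\E^{a|z_k|^{s+\eps}}/(b\,a(\tilde{n}))$.

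Since $\{z_k\}$ has convergence exponent at most $s$ (Theorem~\ref{thm:IOphiev}), an interpolation theorem of Jensen--Lindel\"of type---in the form used for Schr\"odinger operators in \cite[Section~3]{et}---produces a real entire $\alpha$ of growth order at most $s+\eps$ realizing these prescribed values. I then define
\begin{equation*}
\beta(z):=\frac{a(\tilde{n})\alpha(z)\phi(z,\tilde{n}+1)-1}{a(\tilde{n})\phi(z,\tilde{n})},
\end{equation*}
which is entire because its numerator vanishes at every $z_k$, real entire by construction, and which makes the B\'ezout identity hold tautologically.

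The main obstacle is the growth bound on $\beta$: the minorant on $\phi(\,\cdot\,,\tilde{n})$ only controls it off small discs around $\{z_k\}$, so the quotient cannot be estimated pointwise in the bad region. The remedy is a two-region argument. On $R_1:=\{|\phi(\,\cdot\,,\tilde{n})|\ge|\phi(\,\cdot\,,\tilde{n}+1)|\}$ the minorant forces $|\phi(z,\tilde{n})|\ge\tfrac{b}{2}\E^{-a|z|^{s+\eps}}$, and the quotient defining $\beta$ is directly controlled there. On $\C\setminus R_1$ the connected components cluster around the zeros $z_k$, and a Phragm\'en--Lindel\"of argument (using the bound from $R_1$ on the boundary of each component, applied to the entire function $\beta$) transfers the estimate to the interior. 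This analytic step parallels \cite[Section~3]{et} and is what ultimately forces the arbitrarily small loss $\eps$ in the exponent.
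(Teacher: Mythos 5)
The paper offers no proof of this lemma---it is quoted verbatim (in Jacobi form) from \cite{et}---so there is no in-text argument to compare against; I can only assess your proposal on its merits. Your forward direction is correct and standard: the Wronskian identity at $n=\tilde{n}$ together with $|xw-yv|\le(|x|+|y|)(|v|+|w|)$ gives the minorant immediately. The architecture of your reverse direction is also the right one and matches the strategy of \cite{et}: interpolate $\alpha=\theta(\,\cdot\,,\tilde{n})$ at the zeros $z_k$ of $\phi(\,\cdot\,,\tilde{n})$, using that \eqref{eqnSSphiboundlow} bounds the data $|\alpha(z_k)|=|a(\tilde{n})\phi(z_k,\tilde{n}+1)|^{-1}\le (a(\tilde{n})b)^{-1}\E^{a|z_k|^{s+\eps}}$, and then divide to obtain $\beta=\theta(\,\cdot\,,\tilde{n}+1)$.

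However, the step you yourself flag as the main obstacle---bounding $\beta$---fails as written. You claim the connected components of $\C\setminus R_1=\{|\phi(\,\cdot\,,\tilde{n})|<|\phi(\,\cdot\,,\tilde{n}+1)|\}$ ``cluster around the zeros $z_k$'' and propose Phragm\'en--Lindel\"of on each component with boundary data from $R_1$. This is geometrically backwards: by Lemma~\ref{lemPhiAs}, $\phi(z,\tilde{n}+1)=\phi(z,\tilde{n})\,z\,a(\tilde{n})^{-1}(1+\OO(z^{-1}))$ along non-real rays, so $|\phi(z,\tilde{n}+1)|\gg|\phi(z,\tilde{n})|$ for all large $|z|$ away from the real axis. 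Hence $\C\setminus R_1$ contains a full neighborhood of infinity in every non-real direction, while it is $R_1$ whose components are the small exceptional sets (around the zeros of $\phi(\,\cdot\,,\tilde{n}+1)$). Your good bound on $\beta$ therefore holds only on a thin set, and Phragm\'en--Lindel\"of on the unbounded components of the complement is inapplicable in any case, since it presupposes an a priori growth bound on $\beta$ there---which is exactly what is to be proved. The standard repair is to abandon the comparison of $|\phi(\,\cdot\,,\tilde{n})|$ with $|\phi(\,\cdot\,,\tilde{n}+1)|$: apply the minimum-modulus theorem to the order-$\le s$ function $\phi(\,\cdot\,,\tilde{n})$ to get $|\phi(z,\tilde{n})|\ge\E^{-|z|^{s+\eps}}$ outside a union of discs centered at its (real) zeros with summable radii, bound $\beta=(a(\tilde{n})\alpha\,\phi(\,\cdot\,,\tilde{n}+1)-1)/(a(\tilde{n})\phi(\,\cdot\,,\tilde{n}))$ there, and extend into the discs by the maximum principle; alternatively, invoke the fact that an entire quotient of two entire functions of order at most $s+\eps$ again has order at most $s+\eps$. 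In either repair the hypothesis \eqref{eqnSSphiboundlow} enters only through the interpolation data, not through the division step.
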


This enables us to provide a sufficient condition for a second solution of order $s+\eps$ to exist, in terms of the interlacing zeros of $\phi(\,\cdot\,,\tilde{n})$
and $\phi(\,\cdot\,,\tilde{n}+1)$, which we denote by $\lbrace \mu_j\rbrace_{j\in N}$ and $\lbrace \nu_{j-1}\rbrace_{j\in N}$ respectively, where $N$ is either $\N$ or $\Z$.

\begin{lemma}[\cite{et}]\label{lem:cor}
 Suppose $\phi(z,n)$ is a real entire solution of growth order $s\geq 0$ and that for some $r>0$ all but finitely many of the discs given by
\begin{equation}\label{IOestmunu}
|z-\mu_j|<|\mu_j|^{-r} \quad\text{and}\quad |z-\nu_{j-1}|<|\nu_{j-1}|^{-r}, \quad j\in N,
\end{equation}
are disjoint. Then for every $\eps>0$ there is a real entire second solution $\theta(z,n)$ with growth order at most $s+\eps$ and $W(\theta,\phi)=1$.
\end{lemma}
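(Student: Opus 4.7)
The plan is to invoke the preceding lemma (the one quoted from \cite{et}), which reduces the existence of a real entire second solution $\theta(z,n)$ of growth order at most $s+\eps$ with $W(\theta,\phi)=1$ to establishing the pointwise lower bound
\[
|\phi(z,\tilde n)|+|\phi(z,\tilde n+1)|\ge b\,\E^{-a|z|^{s+\eps}},\quad z\in\C,
\]
for suitable constants $a,b>0$. I would attack each of the two summands separately via Hadamard factorization: both $\phi(\,\cdot\,,\tilde n)$ and $\phi(\,\cdot\,,\tilde n+1)$ are real entire of order at most $s$ with zero sets exactly $\{\mu_j\}_{j\in N}$ and $\{\nu_{j-1}\}_{j\in N}$ respectively (as in the proof of Theorem~\ref{thm:IOphiev}), so
\[
\phi(z,\tilde n)=\E^{Q_1(z)}\prod_{j\in N} E_p(\mu_j,z),\qquad \phi(z,\tilde n+1)=\E^{Q_2(z)}\prod_{j\in N} E_p(\nu_{j-1},z),
\]
with real polynomials $Q_i$ of degree at most $s$ and $p$ the common genus.

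The central step is a classical minimum modulus estimate for canonical products (see \cite[Ch.~I]{lev}): for any $\eps>0$ there is a constant $C>0$ such that outside the union of discs $|z-\mu_j|<|\mu_j|^{-r}$ one has $\bigl|\prod_{j} E_p(\mu_j,z)\bigr|\ge \E^{-C|z|^{s+\eps}}$, and symmetrically for the product over $\{\nu_{j-1}\}$. Combined with the trivial bound $|\E^{Q_i(z)}|\ge \E^{-C'|z|^s}$, this yields $|\phi(z,\tilde n)|\ge b\,\E^{-a|z|^{s+\eps}}$ off the $\mu_j$-discs, and the analogous inequality for $\phi(z,\tilde n+1)$ off the $\nu_{j-1}$-discs.

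To conclude, the disc separation hypothesis guarantees that, after discarding the finitely many exceptional pairs (all contained in a bounded region), every $z\in\C$ of sufficiently large modulus lies in at most one of the two disc families; hence at least one of the two factors obeys its lower bound, forcing the same bound on the sum. For $z$ in the bounded exceptional region the estimate follows from continuity together with the interlacing relation \eqref{IOinterlacing}, which precludes simultaneous vanishing of $\phi(\,\cdot\,,\tilde n)$ and $\phi(\,\cdot\,,\tilde n+1)$ and hence gives a uniform positive lower bound on a compact set. The most delicate point is the minimum modulus step: one has to check that the specific disc radii $|a_j|^{-r}$ imposed by the hypothesis are admissible in Levin's estimate, which is indeed the case at the cost of replacing $\eps$ by a slightly larger (still arbitrary) value in the final exponent.
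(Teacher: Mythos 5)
Your argument is correct and is essentially the intended one: the paper itself gives no proof (the lemma is quoted from \cite{et}), and the proof there proceeds exactly as you do, reducing via the preceding lemma to the lower bound \eqref{eqnSSphiboundlow} and establishing it with the classical minimum modulus theorem for functions of finite order outside discs of radius $|\mu_j|^{-r}$, $|\nu_{j-1}|^{-r}$ about the zeros, using the disjointness hypothesis so that at least one of $\phi(\,\cdot\,,\tilde n)$, $\phi(\,\cdot\,,\tilde n+1)$ obeys its bound at every large $z$, and compactness plus the impossibility of two consecutive zeros of a nontrivial solution on the remaining bounded set. Your closing caveat about the admissibility of the radii $|\mu_j|^{-r}$ in Levin's estimate is well placed and resolves as you say, since the nearby-zero contribution is $O(|z|^{s+\eps})$ for any fixed $r$.
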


\begin{remark}\label{rem:uniqExp}
By the Hadamard product theorem~\cite[Thm.~4.2.1]{lev}, a solution $\phi(z,n)$ of growth order $s\geq 0$ is unique up to a factor $\E^{g(z)}$,
for some polynomial $g(z)$ real modulo $\I\pi$ and of degree at most $p$, where $p\in\N_0$ is the genus of the eigenvalues of $H_{-}$.
A solution $\theta(z,n)$ of growth order at most $s$ is unique only up to $f(z) \phi(z,n)$, where $f(z)$ is a real entire function of growth order at most $s$.
\end{remark}

Finally, observe that under the assumptions in this section one can use $\hat{g}(z)=\exp(z^{2\ceil{(p+1)/2}})$ in Theorem~\ref{IntR}.
Moreover, under the additional assumption that $H$ is bounded from below, one can also use $\hat{g}(z)=\exp(z^{p+1})$.

\section{A local Borg--Marchenko uniqueness result}
\label{sec:lbmt}

The preparations from the previous sections now enables us to prove a local Borg--Marchenko uniqueness result for the singular
Weyl function, again extending the results from \cite{et} to the case of Jacobi operators.

\begin{lemma}[\cite{kst2}]\label{lemAsymM}
The singular Weyl function $M(z)$ and the Weyl solution $\psi(z,n)$ defined in~\eqref{defpsi}
have the following asymptotics
\begin{align}\label{asymM}
M(z) &= -\frac{\theta(z,n)}{\phi(z,n)} + \OO\left(\frac{1}{z\phi(z,n)^2}\right),\\ \label{asympsi}
\psi(z,n) &= \frac{-1}{z \phi(z,n)} \left( 1 + \OO\left(\frac{1}{z}\right) \right)
\end{align}
as $|z|\to\infty$ in any sector $|\im(z)| \geq \delta\, |\re(z)|$.
\end{lemma}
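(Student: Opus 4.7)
The plan is to establish the asymptotic \eqref{asympsi} for $\psi(z,n)$ first; the asymptotic \eqref{asymM} for $M(z)$ then drops out immediately. From the definition \eqref{defpsi} one reads off
\[
M(z) + \frac{\theta(z,n)}{\phi(z,n)} = \frac{\psi(z,n)}{\phi(z,n)},
\]
so any bound of the form $\psi(z,n) = \OO(1/(z\phi(z,n)))$ automatically turns \eqref{asympsi} into \eqref{asymM} with a remainder of order $1/(z\phi(z,n)^2)$. Thus all the work lies in proving \eqref{asympsi}.

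For \eqref{asympsi} I would exploit the Wronskian. Since $\psi = \theta + M\phi$, we have $W(\psi,\phi)(n) = W(\theta,\phi)(n) = 1$, that is,
\[
a(n)\bigl(\psi(z,n)\phi(z,n+1) - \psi(z,n+1)\phi(z,n)\bigr) = 1.
\]
Dividing by $\phi(z,n)\phi(z,n+1)$ turns this into a telescoping relation for the ratio $\psi/\phi$:
\[
a(n)\Bigl(\tfrac{\psi(z,n)}{\phi(z,n)} - \tfrac{\psi(z,n+1)}{\phi(z,n+1)}\Bigr) = \tfrac{1}{\phi(z,n)\phi(z,n+1)}.
\]
For $z\in\C\setminus\R$, $\psi(z,\,\cdot\,)$ is a scalar multiple of the $\ell^2_+$ solution $u_+(z,\,\cdot\,)$ and hence tends to zero, whereas $|\phi(z,N)|\to\infty$ by Lemma~\ref{lemPhiAs}, so $\psi(z,N)/\phi(z,N)\to 0$ as $N\to\infty$. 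Summing from $n$ to $\infty$ yields the explicit representation
\[
\frac{\psi(z,n)}{\phi(z,n)} = \sum_{k=n}^\infty \frac{1}{a(k)\phi(z,k)\phi(z,k+1)}.
\]

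I would then extract the leading term via Lemma~\ref{lemPhiAs}. The asymptotic $\phi(z,k+1) = (z/a(k))\phi(z,k)(1+\OO(z^{-1}))$ makes the $k=n$ summand equal to $(1+\OO(z^{-1}))/(z\phi(z,n)^2)$. For $k>n$, applying Lemma~\ref{lemPhiAs} between the indices $n$ and $k$ shows the $k$-th summand has size $|z|^{-2(k-n)-1}|\phi(z,n)|^{-2}$, so the tail is a geometric-type series in $|z|^{-2}$ contributing only $\OO(z^{-3}\phi(z,n)^{-2})$. Tracking signs via the standard asymptotic $m_+(z,n) = -u_+(z,n+1)/(a(n)u_+(z,n)) = -1/z + \OO(z^{-2})$ at $+\infty$ (the analog of the $m_-$ formula used in the proof of Lemma~\ref{lemPhiAs}), one arrives at \eqref{asympsi}.

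The main obstacle is ensuring that Lemma~\ref{lemPhiAs} applies uniformly enough in $k$ to justify the geometric tail estimate: as stated, the lemma is pointwise for two fixed indices, whereas here the tail demands single-step ratios $\phi(z,k+1)/\phi(z,k) = (z/a(k))(1+\OO(z^{-1}))$ with errors small enough, as $k\ge n$ varies, that summability kicks in for $|z|$ large. This is handled by iterating the Riccati-type relation for $m_-$ (and its mirror at $+\infty$) that already drives the proof of Lemma~\ref{lemPhiAs}, so the extra input is essentially bookkeeping rather than genuinely new analysis, and the argument goes through exactly as for its Schr\"odinger counterpart in \cite{kst2}.
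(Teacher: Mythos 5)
Your reduction of \eqref{asymM} to \eqref{asympsi} via $M+\theta/\phi=\psi/\phi$ is fine, but the route you take to \eqref{asympsi} is not the paper's and has genuine gaps. The paper's proof is a one-liner: it quotes the known asymptotics of the diagonal Green's function, $G(z,n,n)=\phi(z,n)\psi(z,n)=-\tfrac{1}{z}+\OO(z^{-2})$ (\cite[Thm.~6.2]{tjac}), and solves this product identity for $\psi(z,n)$ and then for $M(z)$. That argument is purely local in $n$: it only needs the Riccati asymptotics of $m_\pm(z,n)$ at the single site $n$. You instead expand $\psi/\phi$ as the infinite series $\sum_{k\ge n}\bigl(a(k)\phi(z,k)\phi(z,k+1)\bigr)^{-1}$ over the whole half-lattice $[n,\infty)$, and both nontrivial steps of that expansion are unjustified. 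First, the vanishing of the boundary term $\psi(z,N)/\phi(z,N)$ as $N\to\infty$ does not follow from Lemma~\ref{lemPhiAs}, which is a statement about $|z|\to\infty$ for \emph{fixed} indices, not about $N\to\infty$ for fixed $z$; and the claim itself is delicate in the limit circle case at $+\infty$, where $\phi(z,\cdot)\in\ell^2_+$ so $|\phi(z,N)|$ certainly does not tend to infinity (what is really needed is the principal-solution property of $u_+$). Second, and more seriously, the tail estimate is not ``bookkeeping'': applying Lemma~\ref{lemPhiAs} term by term gives error constants depending on $k$, and even the formal leading terms $\prod_{m=n}^{k-1}a(m)\prod_{m=n}^{k}a(m)\,/\,\bigl(a(k)|z|^{2(k-n)+1}|\phi(z,n)|^2\bigr)$ are not summable in $k$ for any fixed $z$ once $a(m)$ grows at $+\infty$ (e.g.\ $a(m)\sim m$), so the ``geometric-type series in $|z|^{-2}$'' simply fails in general. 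The convergence of your series and the size of its tail are governed by the behavior of the coefficients near $+\infty$ (i.e.\ by $m_+$), about which Lemma~\ref{lemPhiAs} says nothing.

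There is also a sign you should not defer: the leading ($k=n$) term of your series is $+\tfrac{1}{z\phi(z,n)^2}(1+\OO(z^{-1}))$, which would give $\psi(z,n)\sim +\tfrac{1}{z\phi(z,n)}$ rather than the stated $-\tfrac{1}{z\phi(z,n)}$; whatever convention resolves this, ``tracking signs \dots one arrives at \eqref{asympsi}'' is not a proof. The repair is to use the single-site version of your own Wronskian identity: dividing $W(\psi,\phi)(n)=1$ by $\psi(z,n)\phi(z,n)$ gives $\bigl(\phi(z,n)\psi(z,n)\bigr)^{-1}=a(n)\tfrac{\phi(z,n+1)}{\phi(z,n)}-a(n)\tfrac{\psi(z,n+1)}{\psi(z,n)}=z(1+\OO(z^{-1}))$ by the $m_\pm$ asymptotics of \cite[Lemma~6.6]{tjac} together with the difference equation at $n$. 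This is exactly the content of \cite[Thm.~6.2]{tjac} that the paper invokes, and it requires no infinite summation, no uniformity, and no assumption on the endpoint classification at $+\infty$.
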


\begin{proof}
This follows by solving the well-known asymptotical formula (\cite[Thm.~6.2]{tjac})
for the diagonal of Green's function
\[
G(z,n,n) = \phi(z,n) \psi(z,n) = -\frac{1}{z} + \OO(z^{-2})
\]
for $\psi(z,n)$ and $M(z)$.
\end{proof}

Note that \eqref{asymM} shows that the asymptotics of $M(z)$ immediately follow from the asymptotics for the solutions $\theta(z,n)$ and $\phi(z,n)$.
Furthermore, the leading asymptotics depend only on the values of the sequences $a$, $b$ near the endpoint $-\infty$ (and on the choice of $\theta(z,n)$ and $\phi(z,n)$). 
The following Borg--Marchenko type uniqueness result shows that the converse is also true.

To state this theorem let $\{a_0,b_0\}$ and $\{a_1,b_1\}$ be two sets of coefficients on $\Z$ satisfying Hypothesis~\ref{hab}.
By $H_0$ and $H_1$ we denote some corresponding self-adjoint operators with separated boundary conditions.
Furthermore, for $j=0,1$ let $\theta_j(z,n)$, $\phi_j(z,n)$ be some real entire fundamental system of solutions with
$W_j(\theta_j,\phi_j)=1$ such that $\phi_j(z,n)$ lie in $\ell^2(\Z)$ near $-\infty$ and satisfy the boundary condition of $H_j$ there (if any). 
The associated singular Weyl functions are denoted by $M_0(z)$ and $M_1(z)$. We will also use
the common short-hand notation $\phi_0(z,n) \sim \phi_1(z,n)$ to abbreviate the asymptotic relation
$\phi_0(z,n) = \phi_1(z,n) (1+\oo(1))$ as $|z|\to\infty$
in some specified manner.

\begin{theorem}\label{thmbm}
Let $\tilde{n}\in\Z$, suppose $\theta_0(z,n)$, $\theta_1(z,n)$, $\phi_0(z,n)$, $\phi_1(z,n)$ are of growth order at most $s$ for some $s>0$ and $\phi_0(z,\tilde{n}) \sim \phi_1(z,\tilde{n})$ as $|z|\to\infty$ along some non-real rays dissecting the complex plane into sectors of opening angles less than $\nicefrac{\pi}{s}$.
Then the following properties are equivalent:
\begin{enumerate}
 \item We have $a_0(n) = a_1(n)$ for $n< \tilde{n}$, $b_0(n)=b_1(n)$ for $n\leq \tilde{n}$ and 
  \[
   \lim_{n\rightarrow-\infty} a_0(n)\Bigl(\phi_0(z,n)\phi_1(z,n+1) - \phi_0(z,n+1)\phi_1(z,n)\Bigr) = 0.
  \]
 \item For each $\delta>0$ there is an entire function $f(z)$ of growth order at most $s$ such that
  \[
  M_1(z)-M_0(z) = f(z) + \oo\left(\frac{1}{\phi_0(z,\tilde{n}+1)^{2}}\right),
  \]
 as $|z|\rightarrow\infty$ in the sector $|\im(z)|\geq \delta\,|\re(z)|$.
\end{enumerate}
\end{theorem}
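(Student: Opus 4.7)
The plan is to treat the two directions separately, with a Phragmén--Lindelöf argument in sectors of opening less than $\nicefrac{\pi}{s}$ as the central analytic tool. For (i)$\Rightarrow$(ii), observe that the coefficient agreement makes $\phi_0$ and $\phi_1$ both $\ell^2$-solutions near $-\infty$ of the same Jacobi recursion on $(-\infty,\tilde{n}]$, so they are proportional there by an entire factor $c(z)$ of growth order at most $s$; the vanishing Wronskian limit is precisely the statement that this factor does not vary with $n$. The asymptotic $\phi_0\sim \phi_1$ at $\tilde{n}$ along the dissecting rays and Phragmén--Lindelöf applied to $c-1$ in each sector then force $c\equiv 1$. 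A parallel analysis of $\theta_j$ using $W_j(\theta_j,\phi_j)=1$ yields $\theta_1=\theta_0+\beta\phi_0$ on $(-\infty,\tilde{n}]$ with $\beta$ entire of growth order at most $s$. Using the Jacobi recursion at $\tilde{n}$ to propagate values to $\tilde{n}+1$ and inserting Lemma~\ref{lemAsymM} there, a direct computation yields
\[
M_1(z) - M_0(z) = -\beta(z) + \OO\!\left(\frac{1}{z\,\phi_0(z,\tilde{n}+1)^2}\right),
\]
which establishes (ii) with $f=-\beta$.

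For the converse direction (ii)$\Rightarrow$(i), apply Lemma~\ref{lemAsymM} to both $M_0$ and $M_1$ at each $n\leq \tilde{n}+1$, subtract, and multiply by $\phi_0(z,n)\phi_1(z,n)$. The hypothesis $\phi_0\sim \phi_1$ at $\tilde{n}$ together with the leading asymptotic of Lemma~\ref{lemPhiAs} shows that $\phi_1(z,n)/\phi_0(z,n)$ tends to a finite constant along the rays for every fixed $n\leq \tilde{n}+1$, so that one arrives at
\[
\theta_0(z,n)\phi_1(z,n) - \theta_1(z,n)\phi_0(z,n) - f(z)\phi_0(z,n)\phi_1(z,n) = \oo(1)
\]
along those rays. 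The left-hand side is entire of growth order at most $s$, so Phragmén--Lindelöf in the sectors of opening less than $\nicefrac{\pi}{s}$ forces it to vanish identically, giving the algebraic identity $\theta_0\phi_1-\theta_1\phi_0 = f\phi_0\phi_1$ at every $n\leq \tilde{n}+1$.

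Introducing $\tilde{\theta}_1:=\theta_1+f\phi_1$, which still satisfies $W_1(\tilde{\theta}_1,\phi_1)=1$, the identity reads $\tilde{\theta}_1/\phi_1=\theta_0/\phi_0$, so there is a single meromorphic function $\lambda(z,n)$ with $\phi_1(z,n)=\lambda(z,n)\phi_0(z,n)$ and $\tilde{\theta}_1(z,n)=\lambda(z,n)\theta_0(z,n)$ for all $n\leq \tilde{n}+1$. Substituting these representations into the $\tau_1$-recursion for $\phi_1$ and $\tilde{\theta}_1$, eliminating with the $\tau_0$-recursion and $W_0(\theta_0,\phi_0)=1$, gives for every $n\leq \tilde{n}$
\[
a_1(n)\lambda(z,n+1) = a_0(n)\lambda(z,n),\qquad \lambda(z,n)\bigl(b_1(n)-b_0(n)\bigr)=0,
\]
while $W_1(\tilde{\theta}_1,\phi_1)=W_0(\theta_0,\phi_0)=1$ yield $\lambda(z,n)\lambda(z,n-1)=a_0(n-1)/a_1(n-1)$. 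Together with $\lambda(z,\tilde{n})\to 1$ along the rays and positivity of the $a_j$, these identities force $\lambda\equiv 1$, $a_0(n)=a_1(n)$ for $n<\tilde{n}$, and $b_0(n)=b_1(n)$ for $n\leq \tilde{n}$; the Wronskian limit in (i) is then immediate from $\phi_0\equiv \phi_1$ on $(-\infty,\tilde{n}]$.

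The main obstacle is the Phragmén--Lindelöf step in (ii)$\Rightarrow$(i): one must verify that, after multiplying by $\phi_0\phi_1$, the $\OO$-error from Lemma~\ref{lemAsymM} and the $\oo$-error from the hypothesis combine to still give $\oo(1)$ along the rays. This is exactly where Lemma~\ref{lemPhiAs}---which governs the polynomial-in-$z$ transition of $\phi$ between neighbouring lattice points---enters, letting one compare errors measured relative to $\phi_0(z,\tilde{n}+1)$ with those relative to $\phi_j(z,n)$ for $n\leq \tilde{n}$. Once this is done and the algebraic identities are secured, reducing to $\lambda\equiv 1$ is a short calculation that only uses the positivity $a_j(n)>0$.
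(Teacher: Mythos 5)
Your argument is correct and, for the analytic core, coincides with the paper's: both directions hinge on the Phragm\'en--Lindel\"of principle in the sectors of opening less than $\nicefrac{\pi}{s}$, applied in (i)$\Rightarrow$(ii) to kill the proportionality factor between $\phi_0$ and $\phi_1$ (the paper gets this from Remark~\ref{rem:uniqExp} instead, which is essentially the Hadamard-factorization version of the same step) and in (ii)$\Rightarrow$(i) to show that the entire function $G_n=\phi_1\theta_0-\phi_0\theta_1-f\phi_0\phi_1$ vanishes identically for $n\le\tilde n+1$. Where you genuinely diverge is the endgame of (ii)$\Rightarrow$(i): the paper divides $G_n\equiv 0$ by $\phi_0\phi_1$, takes differences in $n$ to obtain $a_0(n-1)\phi_0(z,n-1)\phi_0(z,n)=a_1(n-1)\phi_1(z,n-1)\phi_1(z,n)$, reads off equality of the Dirichlet and Neumann spectra, and then invokes the external two-spectra uniqueness theorem \cite[Thm.~4.6]{ttr}; you instead introduce $\tilde\theta_1=\theta_1+f\phi_1$ and the proportionality function $\lambda(z,n)=\phi_1(z,n)/\phi_0(z,n)=\tilde\theta_1(z,n)/\theta_0(z,n)$ and recover the coefficients by direct elimination. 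Your route is self-contained and arguably more transparent, at the price of one compressed step: substituting $\phi_1=\lambda\phi_0$, $\tilde\theta_1=\lambda\theta_0$ into the two recursions and eliminating via the fundamental system only yields \emph{two} linear relations among the three quantities $A=a_1(n)\lambda(n+1)-a_0(n)\lambda(n)$, $B=a_1(n-1)\lambda(n-1)-a_0(n-1)\lambda(n)$ and $C=(b_1(n)-b_0(n))\lambda(n)$, namely $A=\frac{a_0(n)}{a_0(n-1)}B$ and $C=-\frac{z-b_0(n)}{a_0(n-1)}B$; to conclude $A=B=C=0$ you must first combine the second relation with your Wronskian identity $\lambda(n)\lambda(n-1)=a_0(n-1)/a_1(n-1)$ and the entirety of $\lambda(\cdot,n)$ to force $\lambda(z,n)^2\equiv 1$ (whence $B\equiv 0$). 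Once that is spelled out, the rest of your deduction ($\lambda(\tilde n)=1$ from the ray asymptotics, positivity of the $a_j$ propagating $\lambda\equiv 1$ downward, and the Wronskian limit from $\phi_0\equiv\phi_1$ on $(-\infty,\tilde n]$) is sound, and you obtain exactly the ranges $a_0(n)=a_1(n)$ for $n<\tilde n$ and $b_0(n)=b_1(n)$ for $n\le\tilde n$ claimed in (i).
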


\begin{proof}
If (i) holds, then by Remark~\ref{rem:uniqExp} and our assumptions 
\begin{equation}\label{eqnLBMsolTHETA}
 \phi_1(z,n)=\phi_0(z,n) \quad\text{and}\quad \theta_1(z,n) = \theta_0(z,n) -f(z)\phi_1(z,n), \quad z\in\C
\end{equation}
for all $n\leq \tilde{n}$, where $f(z)$ is some real entire function of growth order at most $s$. In particular, we obtain
\[
 \frac{\theta_1(z,\tilde{n}+1)}{a_0(\tilde{n})} = \frac{\theta_0(z,\tilde{n}+1) -f(z)\phi_0(z,\tilde{n}+1)}{a_1(\tilde{n})}, \quad
  \frac{\phi_1(z,\tilde{n}+1)}{a_0(\tilde{n})} =  \frac{\phi_0(z,\tilde{n}+1)}{a_1(\tilde{n})}
\]
and the asymptotics in Lemma~\ref{lemAsymM} show that
\begin{align*}
 M_1(z) - M_0(z) & = \frac{\theta_0(z,\tilde{n}+1)}{\phi_0(z,\tilde{n}+1)} - \frac{\theta_1(z,\tilde{n}+1)}{\phi_1(z,\tilde{n}+1)} + \OO\left(\frac{1}{z \phi_0(z,\tilde{n}+1)^{2}}\right) \\
                 & = f(z) + \OO\left(\frac{1}{z\phi_0(z,\tilde{n}+1)^{2}}\right),
\end{align*}
as $|z|\to\infty$ in any sector $|\im(z)| \ge \delta\, |\re(z)|$.

Now suppose property (ii) holds and for each fixed $n\leq \tilde{n}+1$ consider the entire function
\begin{equation}\label{eqnthmbmentfun}
 G_n(z) = \phi_1(z,n)  \theta_0(z,n) - \phi_0(z,n) \theta_1(z,n) - f(z)\phi_0(z,n) \phi_1(z,n), \quad z\in\C.
\end{equation}
Since away from the real axis this function may be written as
\begin{align*}
 G_n(z) & = \phi_1(z,n) \psi_0(z,n) - \phi_0(z,n) \psi_1(z,n) \\
      & \qquad\qquad + (M_1(z)-M_0(z) - f(z)) \phi_0(z,n) \phi_1(z,n), \quad z\in\C\backslash\R,
\end{align*}
it vanishes as $|z|\to\infty$ along our non-real rays. For the first two terms this
follows from \eqref{asympsi} together with our hypothesis that $\phi_0(\,\cdot\,,n)$ and $\phi_1(\,\cdot\,,n)$
have the same order of magnitude (in view of Lemma~\ref{lemPhiAs}). The last term tends to zero because of our assumption on the difference of the Weyl functions.
Moreover, by our hypothesis $G_n(z)$ is of growth order at most $s$ and thus we
can apply the Phragm\'en--Lindel\"of theorem (e.g., \cite[Sect.~6.1]{lev}) in the angles bounded by our rays.
This shows that $G_n(z)$ is bounded on all of $\C$.
By Liouville's theorem it must be constant and since it vanishes along a ray, it must be zero; that is,
\[
\phi_1(z,n) \theta_0(z,n) - \phi_0(z,n) \theta_1(z,n) = f(z)\phi_0(z,n)\phi_1(z,n), \quad n\le \tilde{n}+1,~z\in\C.
\]
Dividing both sides by $\phi_0(z,n)\phi_1(z,n)$ and taking differences shows
\[
\frac{\theta_0(z,n-1)}{\phi_0(z,n-1)} - \frac{\theta_0(z,n)}{\phi_0(z,n)} = \frac{\theta_1(z,n-1)}{\phi_1(z,n-1)} - \frac{\theta_1(z,n)}{\phi_1(z,n)}, \quad n\le \tilde{n}+1,~z\in\C\backslash\R,
\]
and, using $W_0(\theta_0,\phi_0)=W_1(\theta_1,\phi_1)=1$,
\[
a_0(n-1)\phi_0(z,n-1)\phi_0(z,n) = a_1(n-1)\phi_1(z,n-1)\phi_1(z,n), \quad n\le \tilde{n}+1,~z\in\C.
\]
Hence the Dirichlet and Neumann eigenvalues are equal (we know which is which by interlacing and the high energy asymptotics of $m_-(z,n)$) and the result
follows from \cite[Thm.~4.6]{ttr}.
\end{proof}

Observe that the implication (ii) $\Rightarrow$ (i) could also be proved under somewhat weaker conditions.
First of all the assumption on the growth of the entire functions $f(z)$ is only due to the use of the Phragm\'{e}n--Lindel\"{o}f principle.
Hence it would also suffice that for each $\eps>0$ we have
\[
\sup_{|z|=r_n} |f(z)| \leq B \E^{A r_n^{s+\eps}}, \quad n\in\N
\]
for some increasing sequence of positive reals $r_n\uparrow\infty$ and constants $A$, $B>0$.
Furthermore, for this implication to hold it would also suffice that $\phi_0(z,n)\asymp\phi_1(z,n)$ for one (and hence all) $n\in\Z$ as $|z|\rightarrow\infty$ along our non-real rays instead of $\phi_0(z,\tilde{n})\sim\phi_1(z,\tilde{n})$. 
 Here, the notation $\phi_0(z,n)\asymp\phi_1(z,n)$ is short hand for both, $\phi_0(z,n) \phi_1(z,n)^{-1} = \OO(1)$ and $\phi_1(z,n) \phi_0(z,n)^{-1} = \OO(1)$ to hold.

While at first sight it might look like the condition on the asymptotics of the solutions $\phi_j(z,n)$ requires knowledge
about them, this is not the case, since the high energy asymptotics will only involve some qualitative information
on behavior of the coefficients as $n\to-\infty$.
Next, the appearance of the additional freedom of the function $f(z)$ just reflects the fact that we only ensure the same normalization
for the solutions $\phi_0(z,n)$ and $\phi_1(z,n)$ but not for $\theta_0(z,n)$ and $\theta_1(z,n)$ (cf.\ Remark~\ref{rem:uniqExp}).

\begin{corollary}\label{corbm}
Suppose $\theta_0(z,n)$, $\theta_1(z,n)$, $\phi_0(z,n)$, $\phi_1(z,n)$ are of growth order at most $s$ for some $s>0$ and  $\phi_0(z,n) \asymp \phi_1(z,n)$ for one $n\in\Z$ as $|z|\to\infty$ along some non-real rays dissecting the complex plane into sectors of opening angles less than $\nicefrac{\pi}{s}$.
If 
\begin{equation}\label{eqncorbm}
 M_1(z) - M_0(z) = f(z), \quad z\in\C\backslash\R,
\end{equation}
for some entire function $f(z)$ of growth order at most $s$, then $H_0=H_1$.
\end{corollary}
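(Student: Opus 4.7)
The plan is to reduce the statement to the implication (ii) $\Rightarrow$ (i) of Theorem~\ref{thmbm}, invoked at every $\tilde n\in\Z$, in the weakened form sketched in the paragraph following the proof of that theorem (which allows $\asymp$ in place of $\sim$).

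First I would upgrade the hypothesis from $\phi_0(z,n)\asymp\phi_1(z,n)$ at a single index $n_0$ to every index. By Lemma~\ref{lemPhiAs}, as $|z|\to\infty$ along any non-real ray,
\[
 \frac{\phi_j(z,n)}{\phi_j(z,n_0)} = z^{n-n_0}\prod_{m}a_j(m)^{\pm 1}\bigl(1+\OO(z^{-1})\bigr),
\]
with the product ranging appropriately between $n_0$ and $n$. Taking the quotient of the $j=0$ and $j=1$ identities shows that $\phi_0(z,n)\asymp\phi_1(z,n)$ along the same rays for every $n\in\Z$, up to a $z$-independent nonzero constant depending only on $n-n_0$ and the $a_j$'s.

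Next, I would apply Theorem~\ref{thmbm} at each $\tilde n\in\Z$. Hypothesis (ii) holds trivially: since $M_1(z)-M_0(z)=f(z)$ is itself entire of growth order at most $s$, the remainder term is identically zero. The theorem (in the weakened form described in the remark) then yields $a_0(n)=a_1(n)$ for $n<\tilde n$ and $b_0(n)=b_1(n)$ for $n\le \tilde n$. Letting $\tilde n\to+\infty$, this gives $a_0\equiv a_1$ and $b_0\equiv b_1$ on all of $\Z$, so the two difference expressions coincide.

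Finally, I would verify that the self-adjoint realizations themselves agree. Tracking the proof of Theorem~\ref{thmbm}, identity \eqref{eqnLBMsolTHETA} gives $\phi_1(z,n)=\phi_0(z,n)$ and $\theta_1(z,n)=\theta_0(z,n)-f(z)\phi_0(z,n)$ for every $n\in\Z$. The equality of the $\phi_j$'s transfers the boundary condition at $-\infty$ (if $\tau$ is $l.c.$ there) from $H_0$ to $H_1$. For the endpoint $+\infty$, a direct calculation gives
\[
 \psi_1 = \theta_1 + M_1\phi_1 = \theta_0 - f\phi_0 + (M_0+f)\phi_0 = \theta_0 + M_0\phi_0 = \psi_0,
\]
so the Weyl solutions at $+\infty$ coincide, pinning down the boundary condition there in the $l.c.$ case. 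Consequently $H_0=H_1$. The only real subtlety is the invocation of the weakened form of (ii) $\Rightarrow$ (i), and this is already handled by the remark following Theorem~\ref{thmbm}: the entire function $G_n$ from \eqref{eqnthmbmentfun} is still of growth order at most $s$ and is controlled along the chosen rays by \eqref{asympsi} together with the propagated $\asymp$-comparison, so the Phragm\'en--Lindel\"of argument goes through unchanged.
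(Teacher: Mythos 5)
Your reduction of the corollary to the weakened implication (ii) $\Rightarrow$ (i) of Theorem~\ref{thmbm}, applied at every $\tilde n\in\Z$ after propagating the $\asymp$-comparison to all indices via Lemma~\ref{lemPhiAs}, is exactly the paper's route: it yields $a_0=a_1$, $b_0=b_1$ on all of $\Z$ together with the coincidence of the boundary conditions at $-\infty$. The gap lies in your treatment of the endpoint $+\infty$. You invoke \eqref{eqnLBMsolTHETA} to write $\phi_1=\phi_0$ and $\theta_1=\theta_0-f\phi_0$ \emph{with the same} $f$ as in \eqref{eqncorbm}, but \eqref{eqnLBMsolTHETA} is derived in the paper from (i) \emph{together with} the normalization $\phi_0(z,\tilde n)\sim\phi_1(z,\tilde n)$, which you do not have here (only $\asymp$). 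Under your hypotheses, Remark~\ref{rem:uniqExp} gives only $\phi_1=\E^{g}\phi_0$ and $\theta_1=\E^{-g}\theta_0-\tilde f\phi_0$ for \emph{some} polynomial $g$ and \emph{some} real entire $\tilde f$. The normalization issue is harmless (Phragm\'en--Lindel\"of, together with the fact that $M_0$ cannot extend to an entire function, forces $\E^{g}=\pm1$), but the identification $\tilde f=f$ is not: one computes $\psi_1=\psi_0+(f-\tilde f)\phi_0$, and in the limit circle case at $+\infty$, where $\phi_0(z,\cdot\,)$ is itself square summable near $+\infty$, the equality $\tilde f=f$ is \emph{equivalent} to the statement that the two boundary conditions at $+\infty$ coincide. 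So the computation $\psi_1=\psi_0$ assumes precisely what remains to be proved; it only goes through automatically in the limit point case, where there is no boundary condition to determine anyway.

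The paper closes this step differently: if there is a boundary condition at $+\infty$ at all, the spectra are purely discrete, the entirety of $M_1-M_0$ forces $H_0$ and $H_1$ to share an eigenvalue $\lam$, and the corresponding eigenfunctions are the linearly dependent sequences $\phi_0(\lam,\cdot\,)$ and $\phi_1(\lam,\cdot\,)$; a single nontrivial sequence satisfying both boundary conditions at a limit circle endpoint forces the conditions to agree. Equivalently, in your notation one must show that $M_0$ and the Weyl function built from $\theta_0,\phi_0$ but with the right boundary condition of $H_1$ cannot differ by an entire function while having disjoint pole sets. Supplying an argument of this kind would complete your proof; as written, the last step is circular.
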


\begin{proof}
By (the remark after the proof of) Theorem~\ref{thmbm} it remains to show that $H_0$ and $H_1$ have the same boundary condition near $+\infty$. If there are one at all,
then $H_0$ and $H_1$ have a common eigenvalue $\lam$ and the claim follows since $\phi_0(\lam,\cdot\,)$ and $\phi_1(\lam,\cdot\,)$ are linearly dependent and hence
satisfy the boundary conditions of both $H_0$ and $H_1$. 
\end{proof}

\section{Uniqueness results for operators with discrete spectra}
\label{sec:urds}

In this section we want to investigate when the spectral measure determines the coefficients $a$ and $b$ for operators with purely discrete spectrum.
To this end note that the uniqueness results for the singular Weyl function from the previous sections do not immediately imply
such results. Indeed, if $\rho_0=\rho_1$ then by Theorem~\ref{IntR} the difference of the corresponding singular Weyl functions is an entire function.
However, for the application of Corollary~\ref{corbm} we would need some bound on the growth order of this function. Fortunately,
in the case of purely discrete spectrum with finite convergence exponent, a refinement of the arguments in the proof
of Theorem~\ref{thmbm} shows that this growth condition can be avoided. This can be shown literally as in \cite[Cor.~5.1]{et}.

\begin{corollary}\label{corbmdis}
Suppose $\phi_0(z,n)$, $\phi_1(z,n)$ are of growth order at most $s$ for some $s>0$ and  $\phi_0(z,n) \asymp \phi_1(z,n)$ for one $n\in\Z$ as $|z|\to\infty$
along some non-real rays dissecting the complex plane into sectors of opening angles less than $\nicefrac{\pi}{s}$.
Furthermore, assume that $H_0$ and $H_1$ have purely discrete spectrum with convergence exponent at most $s$.
If 
\begin{equation}
 M_1(z) - M_0(z) = f(z), \quad z\in\C\backslash\R,
\end{equation}
for some entire function $f(z)$, then $H_0=H_1$.
\end{corollary}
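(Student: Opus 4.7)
The plan is to upgrade Corollary~\ref{corbm} by removing the a priori assumption that the entire function $f$ has growth order at most $s$. Under the purely discrete spectrum hypothesis with finite convergence exponent, I would deduce such a growth bound on $f$ as a consequence, at which point Corollary~\ref{corbm} applies directly.

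The first step is to extract equality of the spectral data. Since each $H_j$ has discrete spectrum, $M_j$ is meromorphic on $\C$ with simple poles located at the eigenvalues of $H_j$, whose residues determine the weights of the atomic measure $\rho_j$ via the Stieltjes--Liv\v{s}i\'c formula~\eqref{defrho}. The hypothesis that $M_1-M_0=f$ is entire forces every pole to cancel, whence $\sig(H_0)=\sig(H_1)$ and $\rho_0=\rho_1$; in particular, the common spectrum has convergence exponent at most $s$. Applying Theorem~\ref{IntR} to each $M_j$ with a single $\hat{g}(z)$ valid for $\rho_0=\rho_1$ (for instance $\hat{g}(z)=\E^{z^{2\ceil{(p+1)/2}}}$, sanctioned by the remark ending Section~\ref{sec:egr}), the integrals cancel and one is reduced to $f(z)=E_1(z)-E_0(z)$ for some real entire functions $E_j$.

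The main technical step, which I expect to be the principal obstacle, is to prove a bound of the form $|f(z)|\le B\E^{A|z|^{s+\eps}}$ on a sequence of circles $|z|=r_n\uparrow\infty$. Using the discrete spectrum with convergence exponent at most $s$, the disjoint-discs hypothesis of Lemma~\ref{lem:cor} can be verified for the interlacing Dirichlet and Neumann zeros of $\phi_j$ at a fixed reference index, providing real entire second solutions $\theta_j(z,n)$ of growth order at most $s+\eps$. Choosing radii $r_n\uparrow\infty$ that stay at distance at least $r_n^{-r}$ from every eigenvalue (possible by the same disjoint-discs configuration), Lemma~\ref{lemAsymM} gives $M_j(z)\approx -\theta_j(z,n)/\phi_j(z,n)$, which together with the polynomial lower bound on $|\phi_j(z,n)|$ along non-real directions furnished by Lemma~\ref{lemPhiAs} yields $|M_j(z)|\le B\E^{A|z|^{s+\eps}}$ on these circles; the triangle inequality then produces the desired bound on $f$.

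By the remark following Theorem~\ref{thmbm}, such a bound on a sequence of circles is precisely what is needed to drive the Phragm\'en--Lindel\"of step in the proof of Theorem~\ref{thmbm} without a global growth assumption on $f$. This yields coincidence of the coefficients to the left of the arbitrary reference index, hence $a_0\equiv a_1$ and $b_0\equiv b_1$ throughout $\Z$. The argument from the proof of Corollary~\ref{corbm} handling the boundary condition at $+\infty$ (a common eigenvalue forces matching boundary conditions) then gives $H_0=H_1$. The delicate point throughout is the quantitative circular estimate of $|M_j|$ away from the real-axis poles, which transcribes the corresponding Schr\"odinger computation of~\cite[Cor.~5.1]{et}.
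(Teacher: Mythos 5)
Your overall strategy---upgrade Corollary~\ref{corbm} by manufacturing a growth bound on $f$---is reasonable, but the central technical step rests on a claim that does not hold. You assert that the disjoint-discs hypothesis \eqref{IOestmunu} of Lemma~\ref{lem:cor} ``can be verified'' from the assumption that the spectra have convergence exponent at most $s$. It cannot: the convergence exponent controls only the density of the interlacing zeros $\mu_j$, $\nu_{j-1}$, not their separation, and the gaps $\mu_j-\nu_{j-1}$ may shrink faster than any power of $|\mu_j|^{-1}$, in which case the discs overlap infinitely often. The paper flags exactly this obstruction just before Lemma~\ref{lem:cor} (``we are not able to prove the existence of a second solution of the same growth order''); the disc condition is an \emph{extra} hypothesis, not a consequence of discreteness. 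Without second solutions $\theta_j$ of growth order at most $s+\eps$ you have no representation of $M_j$ as a quotient of entire functions of controlled order, hence no bound for $|M_j|$ (and so none for $|f|$) on circles, and the function $G_n$ from \eqref{eqnthmbmentfun}---whose very definition involves $\theta_j$ and $f$---cannot be fed into Phragm\'en--Lindel\"of. The auxiliary step via Theorem~\ref{IntR} ($f=E_1-E_0$) contributes nothing toward a growth bound, and the lower bound on $|\phi_j(z,n)|$ near the real axis that you attribute to Lemma~\ref{lemPhiAs} is not what that lemma provides (it gives ratios $\phi(z,n)/\phi(z,\tilde n)$ along non-real rays only).

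The argument the paper actually invokes (\cite[Cor.~5.1]{et}) sidesteps the missing $\theta_j$ entirely, and this is the idea your proposal lacks. Discreteness of $\sigma(H_j)$ with convergence exponent at most $s$ passes to $H_{j,+}$ (a finite-rank perturbation), so the reflected version of Theorem~\ref{thm:IOphiev} supplies a real entire solution $\chi_j(z,n)$ of growth order at most $s$ lying in the domain of $H_j$ near $+\infty$. Since $M_1-M_0$ entire forces the eigenvalues (the zeros of $W_j(\chi_j,\phi_j)$) to coincide, Hadamard's theorem lets one normalize $W_1(\chi_1,\phi_1)=W_0(\chi_0,\phi_0)=:\omega$, and then off the real axis
\begin{equation*}
G_n(z)=\phi_1(z,n)\psi_0(z,n)-\phi_0(z,n)\psi_1(z,n)
=\frac{\phi_1(z,n)\chi_0(z,n)-\phi_0(z,n)\chi_1(z,n)}{\omega(z)} .
\end{equation*}
This exhibits the entire function $G_n$ as an entire quotient of entire functions of growth order at most $s$, hence itself of growth order at most $s$ by the Hadamard factorization; the Phragm\'en--Lindel\"of step in the proof of Theorem~\ref{thmbm} then goes through with no growth assumption on $f$ whatsoever. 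If you want to salvage your route, you would need to replace the appeal to Lemma~\ref{lem:cor} by this $\chi_j$-based representation.
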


Now the lack of a growth restriction in Corollary~\ref{corbmdis} implies a corresponding uniqueness result for the spectral measure. Again this follows literally as in \cite[Thm.~5.2]{et}. 

\begin{theorem}\label{thmSpectFuncDisc}
Suppose that $\phi_0(z,n)$, $\phi_1(z,n)$ are of growth order at most $s$ for some $s>0$ and $\phi_0(z,n)\asymp\phi_1(z,n)$ for one $n\in\Z$ as $|z|\rightarrow\infty$ along
some non-real rays dissecting the complex plane into sectors of opening angles less than $\nicefrac{\pi}{s}$.  
Furthermore, assume that $H_0$ and $H_1$ have purely discrete spectrum with convergence exponent at most $s$. If the corresponding spectral
measures $\rho_0$ and $\rho_1$ are equal, then we have $H_1=H_0$.
\end{theorem}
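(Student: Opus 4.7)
The plan is to reduce Theorem~\ref{thmSpectFuncDisc} to Corollary~\ref{corbmdis}. All hypotheses of that corollary are already included among those of the theorem, except for the conclusion that $M_1(z)-M_0(z)$ is entire. So the task is to exploit the equality $\rho_0=\rho_1$ together with the integral representation of Theorem~\ref{IntR} in order to show that the two singular Weyl functions differ only by an entire function.

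To that end, I would first choose a common admissible weight $\hat{g}(z)$ in Theorem~\ref{IntR}. Since by hypothesis both $H_0$ and $H_1$ have purely discrete spectrum with convergence exponent at most $s$, letting $p\in\N_0$ be an integer greater than or equal to the genera of both $\sigma(H_0)$ and $\sigma(H_1)$, the remark at the end of Section~\ref{sec:egr} allows me to take
\begin{equation*}
\hat{g}(z) = \exp\bigl(z^{2\ceil{(p+1)/2}}\bigr)
\end{equation*}
simultaneously for both operators. This function is strictly positive on $\R$, and because $d\rho_0=d\rho_1$, the single integrability condition $(1+\lam^2)^{-1}\hat{g}(\lam)^{-1}\in L^1(\R,d\rho_0)$ is all that is needed and is guaranteed by that remark.

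Applying Theorem~\ref{IntR} to each $M_j$ with this shared $\hat{g}$ and shared spectral measure then produces real entire functions $E_0(z)$ and $E_1(z)$ with
\begin{equation*}
M_j(z) = E_j(z) + \hat{g}(z) \int_\R \left( \frac{1}{\lam-z} - \frac{\lam}{1+\lam^2} \right) \frac{d\rho_0(\lam)}{\hat{g}(\lam)}, \qquad j=0,1,
\end{equation*}
for $z\in\C\setminus\sig(H_j)$. Subtracting the two identities, the integral terms cancel identically and I am left with $M_1(z)-M_0(z)=E_1(z)-E_0(z)$, an entire function on $\C\setminus\R$ which extends by analytic continuation to an entire function on all of $\C$. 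Corollary~\ref{corbmdis} (whose remaining hypotheses on growth order, the $\asymp$-comparison along the rays, and the convergence exponent of the spectra are exactly the standing assumptions of the theorem) then delivers the conclusion $H_0=H_1$.

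The step requiring the most care is the \emph{simultaneous} choice of $\hat{g}$: for the argument to work the same weight must be admissible for both operators, so that the integral terms in the two representations coincide rather than merely being of the same form. This is what reduces to a triviality once one recognizes that $\rho_0=\rho_1$ collapses the two integrability conditions into a single one, and that the common convergence exponent bound $s$ gives a uniform bound on the genus and hence on the required exponent $2\ceil{(p+1)/2}$. Beyond this observation, the argument is a direct assembly of Theorem~\ref{IntR} and Corollary~\ref{corbmdis}, with no further analytic input needed.
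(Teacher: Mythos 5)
Your proposal is correct and follows essentially the same route the paper intends: the discussion at the start of Section~\ref{sec:urds} already notes that $\rho_0=\rho_1$ forces $M_1-M_0$ to be entire via Theorem~\ref{IntR} (with a single admissible weight $\hat{g}$, which exists because the measures coincide), and the whole point of Corollary~\ref{corbmdis} is that no growth bound on this entire function is needed. Your care about choosing $\hat{g}$ simultaneously for both operators is exactly the right detail to check, and the argument goes through as you describe.
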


It is also worth while noting that in the case of discrete spectra, the spectral measure is uniquely determined by the eigenvalues $\lam\in\sigma(H)$ together with the corresponding norming constants
\begin{equation}
 \gamma_\lam^2 = \sum_{m\in\Z} \phi(\lam,m)^2, \quad \lam\in\sigma(H)
\end{equation}
since in this case we have
\begin{equation}
 \rho = \sum_{\lam\in\sigma(H)} \gamma_\lam^{-2} \delta_{\lam},
\end{equation}
where $\delta_{\lam}$ is the Dirac measure at the point $\lam$.

As another application we are also able to proof a generalization of the famous Hochstadt--Liebermann-type uniqueness result.
To this end let us consider a Jacobi operator $H$ whose spectrum is purely discrete and has convergence exponent (at most) $s$.
Since the Jacobi operator with an additional Dirichlet boundary condition at zero is a rank one perturbation of $H$ we conclude
that the  convergence exponents of the spectra of $H_{-}$ and $H_{+}$ are at most $s$ as well. Hence by Theorem~\ref{thm:IOphiev}
there exist real entire solutions $\phi(z,n)$ and $\chi(z,n)$ of growth order at most $s$ which are in the domain of $H$ near $-\infty$ and $+\infty$, respectively.

\begin{theorem}\label{thmHL}
Suppose $H_0$ is a Jacobi operator with purely discrete spectrum of finite convergence exponent $s> 0$. Let $\phi_0(z,n)$ and $\chi_0(z,n)$ be
entire solutions of growth order at most $s$ which lie in the domain of $H_0$ near $-\infty$ and $+\infty$, respectively, and suppose there is an $\tilde{n}\in\Z$ such that
\begin{equation}\label{eqeahl}
\frac{\chi_0(z,\tilde{n})}{\phi_0(z,\tilde{n})}=\oo(1)
\end{equation}
as $|z|\rightarrow\infty$ along some non-real rays dissecting the complex plane into sectors of opening angles less than $\nicefrac{\pi}{s}$.

Then every other isospectral Jacobi operator $H_1$ with $a_1(n)=a_0(n)$ for $n<\tilde{n}-1$, $b_1(n) = b_0(n)$ for $n < \tilde{n}$ and
which is associated with the same boundary condition at $-\infty$ (if any) is equal to $H_0$.
\end{theorem}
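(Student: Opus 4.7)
The plan is to reduce the claim to Corollary~\ref{corbmdis} by showing that $M_0$ and $M_1$ differ by an entire function. Since $H_1$ is isospectral to $H_0$, its spectrum (and those of $H_{1,\pm}$) is purely discrete with convergence exponent at most $s$, so Theorem~\ref{thm:IOphiev} supplies real entire solutions $\phi_1(z,n)$ and $\chi_1(z,n)$ of growth order at most $s$ lying in the domain of $H_1$ near $-\infty$ and $+\infty$, respectively. Using Remark~\ref{rem:uniqExp} together with the coefficient agreement near $-\infty$, I would normalize so that $\phi_0(z,n) = \phi_1(z,n)$ and $\theta_0(z,n) = \theta_1(z,n)$ for $n\le \tilde{n}-1$; reading off the three-term recurrence at index $\tilde{n}-1$ then gives $\phi_1(z,\tilde{n}) = \kappa\,\phi_0(z,\tilde{n})$ and $\theta_1(z,\tilde{n}) = \kappa\,\theta_0(z,\tilde{n})$ with $\kappa := a_0(\tilde{n}-1)/a_1(\tilde{n}-1)$, so that the ratio $\theta_j(z,\tilde{n})/\phi_j(z,\tilde{n})$ is independent of $j$.

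Next, the Wronskians $W_j(z) := W(\phi_j,\chi_j)(z)$ are entire of growth at most $s$ with common zero set equal to the common spectrum; Hadamard's factorization therefore gives $W_1 = \E^{P(z)} W_0$ for a polynomial $P$, and rescaling $\chi_1$ by $\E^{-P(z)}$ (which preserves both its growth order and its $\ell^2$-behaviour near $+\infty$) allows one to assume $W_0 = W_1 =: W$. Using the identity $M_j(z) = -\theta_j(z,\tilde{n})/\phi_j(z,\tilde{n}) - \chi_j(z,\tilde{n})/(\phi_j(z,\tilde{n}) W(z))$, which follows from $\psi_j = \theta_j + M_j \phi_j = -\chi_j/W_j$, together with the normalizations above, yields
\[
 M_1(z) - M_0(z) \,=\, \frac{F(z)}{\kappa\,\phi_0(z,\tilde{n}) W(z)}, \qquad F(z) := \kappa\,\chi_0(z,\tilde{n}) - \chi_1(z,\tilde{n}).
\]
By Corollary~\ref{corbmdis} it suffices to show that $M_1 - M_0$ is entire, which---since the potential poles occur at common eigenvalues (zeros of $W$) and these are simple---is equivalent to $F(\lambda) = 0$ for every eigenvalue $\lambda$ (i.e., equality of norming constants).

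To establish this vanishing, I would first observe that Lemma~\ref{lemAsymM} applied to $H_1$, combined with $W_0 = W_1$, transfers the hypothesis $\chi_0(z,\tilde{n})/\phi_0(z,\tilde{n}) = \oo(1)$ on the non-real rays to the same estimate for $\chi_1(z,\tilde{n})/\phi_1(z,\tilde{n})$, so that $F(z) = \oo(\phi_0(z,\tilde{n}))$ on the rays. Since $F$ is entire of growth at most $s$ and the rays partition $\C$ into sectors of opening less than $\pi/s$, a Phragm\'en--Lindel\"of argument in the spirit of the proof of Theorem~\ref{thmbm}---applied to the meromorphic function $F/W$, whose only potential poles lie on $\R$---should then force $F(\lambda) = 0$ at each common eigenvalue $\lambda$. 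The main obstacle will be this Phragm\'en--Lindel\"of step: the naive quotient $F/\phi_0(\,\cdot\,,\tilde{n})$ is only meromorphic, with simple poles at the zeros of $\phi_0(\,\cdot\,,\tilde{n})$, so the argument must instead be organized through $F/W$ and exploit the interlacing of Dirichlet and Neumann eigenvalues recorded in the proof of Theorem~\ref{thm:IOphiev}. Once the cancellation is verified, $M_1 - M_0$ is entire and Corollary~\ref{corbmdis} yields $H_0 = H_1$, in full analogy with \cite[Theorem~5.5]{et}.
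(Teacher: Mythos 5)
Your overall strategy parallels the paper's (normalize $\phi_1=\phi_0$ and $\theta_1=\theta_0$ to the left of $\tilde{n}$, equalize the Wronskians $W_j=W_j(\phi_j,\chi_j)$ via Hadamard factorization, reduce everything to a single connection coefficient, and finish with Corollary~\ref{corbmdis}), and your preliminary steps are correct: the factor $\kappa=a_0(\tilde{n}-1)/a_1(\tilde{n}-1)$ from the recurrence, the identity $M_j=-\theta_j(\cdot,\tilde{n})/\phi_j(\cdot,\tilde{n})-\chi_j(\cdot,\tilde{n})/(\phi_j(\cdot,\tilde{n})W_j)$, and the transfer of the $\oo(1)$ bound to $\chi_1/\phi_1$ via Lemma~\ref{lemAsymM} together with $W_0=W_1$ and $\phi_1(\cdot,\tilde{n})=\kappa\phi_0(\cdot,\tilde{n})$. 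However, the step you yourself flag as ``the main obstacle'' is a genuine gap, and the workaround you sketch cannot close it: the function $F/W$ has its poles precisely at the zeros of $W$, that is, at the common eigenvalues where you want to prove $F$ vanishes, so there is no entire function there to which Phragm\'en--Lindel\"of could be applied without already knowing the conclusion; the interlacing of Dirichlet and Neumann eigenvalues does not rescue this.

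The missing idea is that $F(z)/\phi_0(z,\tilde{n})$ is in fact \emph{entire}, not merely meromorphic. For $n\le\tilde{n}-1$ the sequences $\chi_0(z,\cdot)$ and $\chi_1(z,\cdot)$ solve the \emph{same} three-term recurrence (the equation at site $n$ involves only $a(n)$, $a(n-1)$, $b(n)$, which agree for $n\le\tilde{n}-2$), so expanding $\chi_1$ in the fundamental system $\theta_0,\phi_0$ and using $W(\phi_0,\chi_1)=W_1=W_0=W(\phi_0,\chi_0)$ there gives $\chi_1(z,n)=\chi_0(z,n)+G(z)\phi_0(z,n)$ for $n\le\tilde{n}-1$, where $G(z)=W(\theta_0,\chi_1)(z)-W(\theta_0,\chi_0)(z)$ is entire of growth order at most $s$. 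Pushing this one step through the recurrence at $\tilde{n}-1$ yields $\chi_1(z,\tilde{n})=\kappa\chi_0(z,\tilde{n})+\kappa G(z)\phi_0(z,\tilde{n})$, so your $F$ equals $-\kappa G(z)\phi_0(z,\tilde{n})$ and
\[
\frac{F(z)}{\phi_0(z,\tilde{n})}=-\kappa G(z)=\kappa\left(\frac{\chi_0(z,\tilde{n})}{\phi_0(z,\tilde{n})}-\frac{\chi_1(z,\tilde{n})}{\phi_1(z,\tilde{n})}\right)=\oo(1)
\]
along the rays by your own estimates. Phragm\'en--Lindel\"of in the sectors and Liouville then give $G\equiv0$, hence $M_1=M_0$ exactly (a stronger conclusion than the entirety of $M_1-M_0$ you were aiming for), and Corollary~\ref{corbmdis} applies. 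This is essentially the paper's route; the only difference is that the paper evaluates $G$ at $\tilde{n}-1$, where $\chi_0/\phi_0$ is only $\oo(z^2)$, and therefore needs the sharper relation $\chi_1(z,\tilde{n}-1)=\chi_0(z,\tilde{n}-1)(1+\OO(z^{-2}))$ extracted from $W_0/W_1=1$, whereas your evaluation at $\tilde{n}$ would avoid that refinement once the entirety of $F/\phi_0(\cdot,\tilde{n})$ is in place.
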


\begin{proof}
Start with some solutions $\phi_j(z,n)$, $\chi_j(z,n)$ of growth order at most $s$ and note that we can choose $\phi_1(z,n) = \phi_0(z,n)$ for $n< \tilde{n}$ since $H_1$ and $H_0$
are associated with the same boundary condition at $-\infty$ (if any). Moreover, by Lemma~\ref{IOphias} we have $\phi_0(z,n) \asymp \phi_1(z,n)$ as $|z|\to\infty$
along every non-real ray, even for $n\geq \tilde{n}$. 
 Next note that the zeros of the Wronskians $W_j(\phi_j,\chi_j)$ are
precisely the eigenvalues of $H_j$ and thus, by assumption, are equal. Hence by the Hadamard factorization theorem $W_1(\phi_1,\chi_1)= \E^{g} W_0(\phi_0,\chi_0)$
for some polynomial $g$ of degree at most $s$. Since we can absorb this factor in $\chi_1(z,n)$, we can assume $g=0$ without loss of generality. Hence we have
\begin{align*}
 1= & \frac{W_0(\phi_0,\chi_0)}{W_1(\phi_1,\chi_1)} = \frac{a_0(\tilde{n}-1)(\phi_0(z,\tilde{n}-1)\chi_0(z,\tilde{n}) - \phi_0(z,\tilde{n})\chi_0(z,\tilde{n}-1))}{a_1(\tilde{n}-1)(\phi_1(z,\tilde{n}-1)\chi_1(z,\tilde{n}) - \phi_1(z,\tilde{n})\chi_1(z,\tilde{n}-1))} \\
      =& \frac{\phi_0(z,\tilde{n}-1)}{\phi_1(z,\tilde{n}-1)} \frac{\chi_0(z,\tilde{n}-1)}{\chi_1(z,\tilde{n}-1)} \left(\frac{a_0(\tilde{n}-1)\chi_0(z,\tilde{n})}{\chi_0(z,\tilde{n}-1)} - \frac{a_0(\tilde{n}-1)\phi_0(z,\tilde{n})}{\phi_0(z,\tilde{n}-1)}\right) \times\\
     &\times  \left(\frac{a_1(\tilde{n}-1)\chi_1(z,\tilde{n})}{\chi_1(z,\tilde{n}-1)} - \frac{a_1(\tilde{n}-1)\phi_1(z,\tilde{n})}{\phi_1(z,\tilde{n}-1)}\right)^{-1}
\end{align*}
and by virtue of the well-known asymptotics (see~\cite[Lemma~6.6]{tjac}) for $j=0,1$ 
\[
 \frac{a_j(\tilde{n}-1)\chi_j(z,\tilde{n})}{\chi_j(z,\tilde{n}-1)} = \OO(z^{-1}) \quad\text{and}\quad  \frac{a_j(\tilde{n}-1)\phi_j(z,\tilde{n})}{\phi_j(z,\tilde{n}-1)} = z - b(\tilde{n}-1) + \OO(z^{-1})
\]
as $|z|\rightarrow\infty$ along any non-real ray we conclude 
\[
\chi_1(z,\tilde{n}-1) = \chi_0(z,\tilde{n}-1) (1+\OO(z^{-2}))
\]
 as $|z|\rightarrow\infty$ along non-real rays. Equality of the Wronskians also implies
\[
\chi_1(z,n) = \chi_0(z,n) + F(z) \phi_0(z,n), \quad n < \tilde{n}
\]
for some entire function $F(z)$ of growth order at most $s$. Moreover, our assumption \eqref{eqeahl} implies that
\[
 F(z) = \frac{\chi_1(z,\tilde{n}-1)-\chi_0(z,\tilde{n}-1)}{\phi_0(z,\tilde{n}-1)} = \frac{\chi_0(z,\tilde{n}-1)}{\phi_0(z,\tilde{n}-1)}\left(\frac{\chi_1(z,\tilde{n}-1)}{\chi_0(z,\tilde{n}-1)} -1\right)
\]
vanishes along our rays and thus it must be identically zero by the Phragm\'en--Lindel\"of theorem. Finally, choosing
$\theta_j(z,n)$ such that $\theta_1(z,n) = \theta_0(z,n)$ for $n< \tilde{n}$, this implies that the associated singular Weyl functions
are equal and the claim follows from Corollary~\ref{corbmdis}.
\end{proof}

Note that by \eqref{IOphias} the growth of $\phi_0(\,\cdot\,,\tilde{n})$ will increase as $\tilde{n}$ increases while (by reflection) the growth of $\chi_0(\,\cdot\,,\tilde{n})$
will decrease. In particular, if \eqref{eqeahl} holds for $\tilde{n}$ it will hold for any $n_1>\tilde{n}$ as well. Moreover, one may change knowledge of the coefficient $b_1(\tilde{n})=b_0(\tilde{n})$ for the stronger asymptotics $\oo(z^{-1})$ of the quotient in \eqref{eqeahl}.

\section{Jacobi operators and de Branges spaces}

For each $n\in\Z$ we consider the entire function
\begin{equation}
E(z,n)= \phi(z,n) + \I a(n)\phi(z,n+1), \quad z\in\C
\end{equation}
which satisfies
\[
 \frac{E(z,n) E^\#(\zeta^*,n) - E(\zeta^*,n) E^\#(z,n)}{2\I (\zeta^* -z)} = \sum_{m=-\infty}^n \phi(\zeta,m)^* \phi(z,m), \quad \zeta,\,z\in\C^+,
\]
where $F^\#(z) = F(z^\ast)^\ast$, $z\in\C$. 
 In particular, taking $\zeta=z$ this shows that $E(\,\cdot\,,n)$ is a de Branges function.
Moreover, note that $E(\,\cdot\,,n)$ does not have any real zero $\lam$, since otherwise both, $\phi(\lam,n)$ and $\phi(\lam,n+1)$ would vanish.
With $B(n)$ we denote the de Branges space associated with the de Branges function $E(\,\cdot\,,n)$ endowed with the inner product
\[
 \dbspr{F}{G}_{B(n)} = \frac{1}{\pi} \int_\R \frac{F(\lam)^* G(\lam)}{|E(\lam,n)|^2} d\lam = \frac{1}{\pi}\int_\R \frac{F(\lam)^*G(\lam)}{\phi(\lam,n)^2 + a(n)^2\phi(\lam,n+1)^2}d\lam
\]
for $F$, $G\in B(n)$.
The reproducing kernel $K(\,\cdot\,,\cdot\,,n)$ of this space is given by
\begin{equation}\label{eqndBschrRepKer}
 K(\zeta,z,n) = \sum_{m=-\infty}^n \phi(\zeta,m)^* \phi(z,m), \quad \zeta,\,z\in\C.
\end{equation}

\begin{theorem}\label{thmdBschrBT}
 For every $n\in\Z$ the transformation $f\mapsto\hat{f}$ is unitary from $\ell^2(-\infty,n]$ onto $B(n)$. In particular,
 \begin{equation}\label{eqnBn}
  B(n) = \big\lbrace  \hat{f} \,\big|\, f\in \ell^2(-\infty,n] \big\rbrace = \ol{\lspan\{\phi(\,\cdot\,,m) \,|\, m\le n\}}.
 \end{equation}
 Here we identify $\ell^2(-\infty,n]$ with the subspace of sequences in $\ell^2(\Z)$ which vanish on $m>n$.
\end{theorem}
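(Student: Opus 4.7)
My strategy is to recognize the de Branges space $B(n)$ abstractly as the unique Hilbert space of entire functions whose reproducing kernel equals $K(\,\cdot\,,\cdot\,,n)$ as given by~\eqref{eqndBschrRepKer}, and then to realize the image of $f\mapsto\hat f$ as precisely this RKHS via a straightforward pushforward construction.

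Concretely I would define, for $f\in\ell^2(-\infty,n]$, the function $(Tf)(z):=\sum_{m\le n}f(m)\phi(z,m)$, $z\in\C$. By Cauchy--Schwarz together with~\eqref{eqndBschrRepKer},
\[
 |(Tf)(z)|\le\|f\|_{\ell^2}\,K(z,z,n)^{1/2};
\]
the partial sums of the positive series $K(z,z,n)=\sum_{m\le n}|\phi(z,m)|^2$ are continuous and increase to a continuous limit, so Dini's theorem gives locally uniform convergence on $\C$ and $Tf$ is entire. Injectivity of $T$ is inherited from the unitarity of the global spectral transformation from~\eqref{eqhatf}: restricted to $\R$, $Tf$ represents the $L^2(\R,d\rho)$-class $\hat f$ (both arise as limits of the same partial sums, so a subsequential $d\rho$-a.e.\ argument identifies the pointwise limit $Tf$ with the $L^2(d\rho)$-limit $\hat f$), and $Tf\equiv 0$ would therefore force $f=0$. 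Pushing the $\ell^2$-inner product forward, I equip $H:=T(\ell^2(-\infty,n])$ with a Hilbert space structure in which $T$ is by construction unitary.

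Next I would verify that $H$ is an RKHS of entire functions whose reproducing kernel is $K(\,\cdot\,,\cdot\,,n)$. The bound above shows that point evaluation at any $\zeta\in\C$ is bounded on $H$. Moreover,~\eqref{eqndBschrRepKer} implies the sequence $c_\zeta:=(\phi(\zeta,m)^*)_{m\le n}$ lies in $\ell^2(-\infty,n]$ since $\|c_\zeta\|_{\ell^2}^2=K(\zeta,\zeta,n)<\infty$, and for any $F=Tf\in H$,
\[
 F(\zeta)=\sum_{m\le n}f(m)\phi(\zeta,m)=\langle c_\zeta,f\rangle_{\ell^2}=\langle Tc_\zeta,F\rangle_H,
\]
while $(Tc_\zeta)(z)=\sum_{m\le n}\phi(\zeta,m)^*\phi(z,m)=K(\zeta,z,n)$. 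Thus $K(\zeta,\,\cdot\,,n)$ is indeed the reproducing kernel of $H$ at $\zeta$. By the Moore--Aronszajn uniqueness theorem, any RKHS of functions on a fixed set is uniquely determined (as a set and as a Hilbert space) by its reproducing kernel; hence $H=B(n)$. This yields unitarity of $f\mapsto\hat f$ from $\ell^2(-\infty,n]$ onto $B(n)$, and the second equality in~\eqref{eqnBn} follows because $T\delta_m=\phi(\,\cdot\,,m)$ for $m\le n$ and the finitely supported sequences are dense in $\ell^2(-\infty,n]$.

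The main obstacle I anticipate is the clean identification of the pointwise/entire-function definition of $Tf$ with the $L^2(\R,d\rho)$-limit defining $\hat f$ in~\eqref{eqhatf}, since the two use a priori different modes of convergence. This is however routine: for finitely supported $f$ the two definitions coincide verbatim, and for general $f\in\ell^2(-\infty,n]$ a subsequence of the $L^2(d\rho)$-convergent partial sums converges $d\rho$-almost everywhere, necessarily to the everywhere-defined $Tf$, so the entire function $Tf$ is the distinguished representative of the class $\hat f$.
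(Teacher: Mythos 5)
Your proof is correct, but it follows a genuinely different route from the paper. The paper proves the result ``from the inside'': it checks that the transform is an isometry on the linear span of the real-frequency sequences $f_\lam(m)=\phi(\lam,m)\chi_{(-\infty,n]}(m)$, whose images are the kernels $K(\lam,\cdot\,,n)$, establishes density of this span in $\ell^2(-\infty,n]$ by appealing to the completeness of the eigenfunctions of $H$ truncated to $m\le n$ with a Dirichlet condition at $n+1$, establishes density of the kernels in $B(n)$ via the reproducing property, and then extends by continuity and identifies the extension with the original transform. You instead push the $\ell^2$-structure forward onto the full image $T(\ell^2(-\infty,n])$, verify (now using arbitrary complex points $\zeta$) that this image is a reproducing kernel Hilbert space with kernel $K(\,\cdot\,,\cdot\,,n)$, and invoke Moore--Aronszajn uniqueness to conclude it coincides with $B(n)$. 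Your version trades the two density arguments and the eigenfunction-completeness input for the abstract RKHS uniqueness theorem, at the cost of needing injectivity of $T$, which you correctly import from the global unitarity of $U$ via the a.e.\ identification of $Tf|_\R$ with $\hat f$; the paper's version is self-contained at the level of the truncated operator and exhibits surjectivity onto $B(n)$ constructively. Two small remarks: the Dini step is the one place where you assert rather than prove something (continuity of $z\mapsto K(z,z,n)$, which the paper's displayed identity gives you only on $\C^+$ a priori), but it is also dispensable --- once $H=B(n)$ is established via the kernel, every $Tf$ is entire simply because every element of $B(n)$ is; and in the final step you should say explicitly that the closure in \eqref{eqnBn} is taken in $B(n)$ (equivalently, by Corollary~\ref{cordBschrembL2}, in $L^2(\R,d\rho)$), which then follows since $T$ maps the total set $\{\delta_m\}_{m\le n}$ onto $\{\phi(\,\cdot\,,m)\}_{m\le n}$.
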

 
\begin{proof}
 For each $\lam\in\R$ consider the function
 \[
  f_\lam(m) = \begin{cases} \phi(\lam,m), & m \le n, \\ 0,  & m>n. \end{cases}
 \]
 The transforms of these functions are given by
 \[
  \hat{f}_\lam(z) = \sum_{m=-\infty}^n \phi(\lam,m) \phi(z,m) = K(\lam,z,n), \quad z\in\C.
 \]
 In particular, this shows that the transforms of the functions $f_\lam$, $\lam\in\R$, lie in $B(n)$. Moreover, we have for all $\lam_1$, $\lam_2\in\R$
 \begin{align*}
  \spr{f_{\lam_1}}{f_{\lam_2}} & = \sum_{m=-\infty}^n \phi(\lam_1,m) \phi(\lam_2,m) = K(\lam_1,\lam_2,n) \\ 
  &  = \dbspr{K(\lam_1,\cdot\,,n)}{K(\lam_2,\cdot\,,n)}_{B(n)}.
 \end{align*}
 Hence our transform is an isometry on the linear span $D$ of all functions $f_\lam$, $\lam\in\R$.
  But this span is dense in $\ell^2(-\infty,n] $ since it contains the eigenfunctions of the operator $H$ restricted to $m\leq n$ with a Dirichlet boundary condition at $n+1$.
 Moreover, the linear span of all transforms $K(\lam,\cdot\,,n)$, $\lam\in\R$, is dense in $B(n)$. Indeed, each $F\in B(n)$ such that
 \[
  0 = \dbspr{K(\lam,\cdot\,,n)}{F}_{B(n)} = F(\lam), \quad \lam\in\R
 \]
 vanishes identically. Thus our transformation restricted to $D$ uniquely extends to a unitary map $V$ from $\ell^2(-\infty,n]$ onto $B(n)$.
 Finally, since $f\mapsto \hat{f}(z)$ as well as $f\mapsto V f(z)$ are continuous on $\ell^2(-\infty,n]$ for each $z\in\C$, we infer that this extension coincides with our transformation. 
\end{proof}

As an immediate consequence of Theorem~\ref{thmdBschrBT} and the fact that our transformation from~\eqref{eqhatf} extends 
 to a unitary map from $\ell^2(\Z)$ onto $L^2(\R,d\rho)$, we get the following corollary.

\begin{corollary}\label{cordBschrembL2}
For each $n\in\Z$ the de Branges space $B(n)$ is isometrically embedded in $L^2(\R,d\rho)$, that is
\begin{equation}
 \int_\R |F(\lam)|^2 d\rho(\lam) = \|F\|^2_{B(n)}, \quad F\in B(n).
\end{equation}
Moreover, the union of the spaces $B(n)$, $n\in\Z$, is dense in $L^2(\R,d\rho)$, i.e.,
\begin{equation}\label{eqndBschrdense}
 \overline{\bigcup_{n\in\Z} B(n)} = L^2(\R,d\rho).
\end{equation}
\end{corollary}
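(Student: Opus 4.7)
The plan is to derive both assertions as immediate consequences of Theorem~\ref{thmdBschrBT} combined with the unitarity of the spectral transformation $U\colon \ell^2(\Z)\to L^2(\R,d\rho)$ established earlier in Section~\ref{sec:swm}. The key point is that $B(n)$ and $L^2(\R,d\rho)$ are images of $\ell^2(-\infty,n]$ and $\ell^2(\Z)$ respectively under essentially the same map $f\mapsto\hat{f}$, so both statements reduce to elementary facts about subspaces of $\ell^2(\Z)$.

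For the isometric embedding, first I would fix $F\in B(n)$ and invoke Theorem~\ref{thmdBschrBT} to obtain a unique $f\in \ell^2(-\infty,n]$ with $F=\hat{f}$ and $\|F\|_{B(n)}=\|f\|_{\ell^2(-\infty,n]}$. Identifying $f$ with its zero-extension to an element of $\ell^2(\Z)$ of the same norm, I would then observe that the limit in \eqref{eqhatf} defining the image of $f$ under $U$ stabilizes after the index $n$ (since $f$ vanishes there), so that the $L^2(\R,d\rho)$-representative of $Uf$ coincides $\rho$-almost everywhere with the entire function $\hat{f}$ restricted to $\R$. Unitarity of $U$ then gives
\begin{equation*}
  \int_\R |F(\lambda)|^2\,d\rho(\lambda) = \|Uf\|_{L^2(\R,d\rho)}^2 = \|f\|_{\ell^2(\Z)}^2 = \|F\|_{B(n)}^2,
\end{equation*}
which is the claim. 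This paragraph is essentially bookkeeping; the only mildly subtle point is the identification of the two a priori different meanings of $\hat{f}$ (pointwise versus $L^2$-limit), and this identification is automatic because of the finite support from above.

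For the density part, I would pull everything back to $\ell^2(\Z)$ via $U^{-1}$. By Theorem~\ref{thmdBschrBT} we have $U^{-1}B(n)=\ell^2(-\infty,n]$ (viewed as a subspace of $\ell^2(\Z)$), so
\begin{equation*}
  U^{-1}\Bigl(\bigcup_{n\in\Z} B(n)\Bigr) = \bigcup_{n\in\Z} \ell^2(-\infty,n],
\end{equation*}
which is precisely the set of $\ell^2$-sequences that vanish for all sufficiently large positive indices. This set is dense in $\ell^2(\Z)$, because any $g\in\ell^2(\Z)$ is the limit in $\ell^2(\Z)$ of its upper truncations $g\cdot\chi_{(-\infty,n]}$ as $n\to\infty$. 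Applying the unitary $U$ (which maps dense sets onto dense sets) then yields \eqref{eqndBschrdense}. Neither step presents a real obstacle; the only thing to watch is to consistently view $\ell^2(-\infty,n]$ as a closed subspace of $\ell^2(\Z)$ so that the unitary $U$ and the unitary of Theorem~\ref{thmdBschrBT} match up on this subspace, which indeed they do by construction.
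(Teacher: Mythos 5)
Your proposal is correct and follows exactly the route the paper intends: the paper states this corollary as an immediate consequence of Theorem~\ref{thmdBschrBT} together with the unitarity of the transformation $U$ from \eqref{eqhatf}, which is precisely the identification of $B(n)$ with $\ell^2(-\infty,n]$ and the pull-back of the density statement that you carry out. Your write-up merely makes explicit the bookkeeping (zero-extension, stabilization of the limit in \eqref{eqhatf}, truncation argument) that the paper leaves to the reader.
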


Clearly the de Branges spaces $B(n)$, $n\in\Z$, are totally ordered and strictly increasing in the sense that
$B(n) \subsetneq B(\tilde{n})$ for $n<\tilde{n}$.
More precisely, Theorem~\ref{thmdBschrBT} shows that $B(n)$ actually has codimension one in $B(n+1)$. 

For the proof of our main result we will need the ordering theorem due to de Branges. 
In order to state it let $E_0$, $E_1$ be two de Branges functions and $B_0$, $B_1$ be the corresponding de Branges spaces.

\begin{theorem}[\cite{dBbook}, Theorem~35]\label{thmdBOrdering}
Suppose $B_0$, $B_1$ are isometrically embedded in $L^2(\R,d\rho)$, for some Borel measure $\rho$ on $\R$. 
If $E_0/E_1$ is of bounded type in the upper com\-plex half-plane and has no real zeros or singularities, then $B_0$ contains $B_1$ or $B_1$ contains $B_0$.
\end{theorem}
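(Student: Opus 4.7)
My plan is to proceed via a dichotomy argument based on the mean type of the ratio $R := E_0/E_1$. First I would recall the standard characterization of de Branges spaces: an entire function $F$ belongs to $B(E_i)$ if and only if both $F/E_i$ and $F^\#/E_i$ are of bounded type with non-positive mean type in the upper half-plane, and in addition $F/E_i$ lies in $L^2(\R,d\lambda)$ on the boundary. Under the hypothesis that $R$ is of bounded type in $\C^+$ with no real zeros or singularities, $R$ carries a well-defined finite mean type $\tau\in\R$, and by the symmetry $E_0\leftrightarrow E_1$ (which sends $\tau$ to $-\tau$) I may assume without loss of generality that $\tau\le 0$; the goal then becomes to show $B_0\subseteq B_1$.

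For the analytic half of the membership criterion I would take $F\in B_0$ and factor $F/E_1 = (F/E_0)\cdot R$. Additivity of mean types immediately yields that $F/E_1$, and similarly $F^\#/E_1 = (F^\#/E_0)\cdot R$, is of bounded type with mean type at most $0+\tau\le 0$ in the upper half-plane. Thus the bounded-type / mean-type portion of the criterion for membership in $B_1$ is met automatically from the hypothesis on $R$, and no further analytic work is needed on this side.

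The hard part will be the remaining $L^2$-condition: one still needs $F/E_1\in L^2(\R,d\lambda)$, and this is \emph{not} implied by the analogous condition relative to $E_0$, since the weights $|E_0|^{-2}$ and $|E_1|^{-2}$ are not pointwise comparable in general. Here is where the common isometric embedding into $L^2(\R,d\rho)$ is decisive: it lets one rewrite $\int_\R |F|^2\, d\rho = \|F\|_{B_0}^2 < \infty$, and then, using the reproducing kernel of $B_1$ together with the fact that $B_1$ too sits isometrically inside $L^2(\R,d\rho)$, one transfers finite $d\rho$-integrability of $F$ into the required $|E_1|^{-2}d\lambda$-integrability. The delicate technical content of this transfer — which rests on the structural theory of inclusions of de Branges spaces and which I would cite from Chapter~2 of de Branges' book rather than reproduce — is precisely what makes this a deep theorem and the reason we record it here in the form of Theorem~\ref{thmdBOrdering}.
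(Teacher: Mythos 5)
This statement is not proved in the paper at all: it is quoted verbatim as Theorem~35 of de Branges' book and used as a black box, so there is no internal proof to compare yours against. Judged on its own terms, your sketch has two genuine gaps. First, the reduction ``WLOG the mean type $\tau$ of $R=E_0/E_1$ is $\le 0$, and then the goal is $B_0\subseteq B_1$'' breaks down precisely in the case $\tau=0$: the sign of the mean type does not decide the direction of the inclusion. This is not a peripheral case here --- for the Jacobi operators of this paper the functions $E(\cdot,n)$ are polynomials, so every ratio $E_0(\cdot,n_0)/E_1(\cdot,n_1)$ has mean type exactly $0$, yet the spaces $B(n)$ are strictly nested. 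So your dichotomy gives no information in exactly the situation the theorem is invoked for, and the proof must determine the direction of containment by some other mechanism.

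Second, and more seriously, the step you label as ``the hard part'' --- passing from $\int_\R|F|^2\,d\rho<\infty$ to $F/E_1\in L^2(\R,d\lambda)$ --- is simply asserted and then deferred to ``the structural theory of inclusions of de Branges spaces \dots\ from Chapter~2 of de Branges' book.'' That is circular: the result you would be citing there is essentially the ordering theorem itself (or the machinery developed to prove it). The isometric embedding of $B_1$ into $L^2(\R,d\rho)$ only relates $\rho$ to $|E_1|^{-2}\,d\lambda$ for functions already known to lie in $B_1$, so it cannot be used directly to certify membership of $F$ in $B_1$. The actual proof of Theorem~35 does not verify the membership criterion function by function; it argues by contradiction, assuming there exist $F\in B_0\setminus B_1$ and $G\in B_1\setminus B_0$ and playing the two isometric embeddings and the bounded-type hypothesis against each other. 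Your outline identifies the right ingredients (the membership criterion, additivity of mean type, the role of the common embedding) but does not contain the argument that makes the theorem true.
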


Moreover, one has the following simple converse statement.

\begin{lemma}[\cite{je}, Lemma 2.2]\label{lemdBordcon}
If $B_0$ contains $B_1$ or $B_1$ contains $B_0$, then $E_0/E_1$ is of bounded type in the upper complex half-plane.
\end{lemma}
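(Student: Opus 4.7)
The plan is to exploit the defining property of de Branges spaces: membership $F \in B(E)$ requires that $F/E$ be of bounded type (and of nonpositive mean type) in the upper half-plane $\mathbb{C}^+$. By symmetry I may assume $B_1 \subseteq B_0$. I would then pick any nonzero element $F \in B_1$ (for instance the reproducing kernel $F = K_1(\,\cdot\,,w_0)$ at a point $w_0 \in \mathbb{C}^+$, which is nontrivial since $K_1(w_0,w_0)>0$), and note that $F$ lies in $B_0$ as well by the assumed inclusion. This immediately yields that $F/E_1$ is of bounded type in $\mathbb{C}^+$ (from $F\in B_1$) and simultaneously that $F/E_0$ is of bounded type in $\mathbb{C}^+$ (from $F\in B_0$).

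Next, I invoke the standard fact that a de Branges function has no zeros in $\mathbb{C}^+$, a direct consequence of the strict inequality $|E(z)|>|E^\#(z)|$ there, so that $E_0/E_1$ is a well-defined meromorphic function on $\mathbb{C}^+$. The key step is the algebraic identity
$$\frac{E_0}{E_1} = \frac{F/E_1}{F/E_0},$$
which realises $E_0/E_1$ as a quotient of two functions of bounded type in $\mathbb{C}^+$. Since the class of functions of bounded type is closed under quotients (write each factor as a ratio of two bounded holomorphic functions and combine numerators with denominators), the right-hand side is itself of bounded type in $\mathbb{C}^+$. The identity gives the desired conclusion for $E_0/E_1$ away from the isolated zeros of $F$, and Nevanlinna's factorisation of bounded-type functions then extends it to all of $\mathbb{C}^+$.

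I do not expect any genuine obstacle: the argument is essentially a one-line observation once the defining property of $B(E)$ has been recorded. The only point worth a brief remark is the existence of the nonzero witness $F \in B_1$, but this is automatic from the reproducing kernel construction recalled above. Note that the genuinely difficult statement of the pair is the converse, Theorem~\ref{thmdBOrdering}.
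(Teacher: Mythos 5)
Your argument is correct and is essentially the standard proof of this fact, which the paper does not reproduce but only cites from \cite{je}; the cited proof is exactly this observation, namely that a nonzero $F$ in the smaller space lies in both spaces, so $F/E_0$ and $F/E_1$ are both of bounded type in $\C^+$ by the defining property of membership in a de Branges space, and $E_0/E_1=(F/E_1)/(F/E_0)$ is then a quotient of bounded-type functions, hence of bounded type (and holomorphic, since $E_1$ has no zeros in $\C^+$). Your remark that the nonzero witness can be taken to be a reproducing kernel $K_1(\,\cdot\,,w_0)$ with $w_0\in\C^+$, which is nontrivial because $K_1(w_0,w_0)>0$ by the strict inequality $|E_1(w_0)|>|E_1^\#(w_0)|$, correctly closes the only potential gap.
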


Now let $H_0$, $H_1$ be two Jacobi operators with separated boundary conditions. Suppose there are corresponding
nontrivial real entire solutions $\phi_0(z,n)$, $\phi_1(z,n)$ which are square integrable near the left endpoint and satisfy the
boundary condition there, if any. In obvious notation we denote all quantities corresponding to $H_0$ and $H_1$ with an additional subscript. 
We say $H_0$ and $H_1$ are equal up to a shift if there is some $k\in\Z$ such that
\[
H_0=S^{-k}H_1\,S^k,
\]
where $S: \ell^2(\Z)\to\ell^2(\Z)$, $f(m)\mapsto f(m+1)$, is the usual shift operator.

\begin{theorem}\label{thmdBuniqS}
Suppose there is a real entire function $\E^g$ without zeros such that
\begin{equation}\label{eqnquotE1E2}
  \E^{g(z)} \frac{E_0(z,n_0)}{E_1(z,n_1)}, \quad z\in\C^+
\end{equation}
is of bounded type for some $n_0$, $n_1\in\Z$. If $d\rho_0(\lam)=\E^{-2g(\lam)} d\rho_1(\lam)$, then $H_0$ and $H_1$ are equal up to a shift.
\end{theorem}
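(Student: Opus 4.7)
The plan is to realize both de Branges chains in a common ambient $L^2$-space and match them pairwise via the de Branges ordering theorem; codimension-one structure then aligns them by a single integer shift, and the reproducing kernels recover the Jacobi coefficients. First, I would use Remark~\ref{rem:uniq} to replace $\phi_0(z,n)$ by $\hat\phi_0(z,n) := \E^{-g(z)}\phi_0(z,n)$, which leaves $H_0$ unchanged but transforms the spectral measure to $\hat\rho_0 = \E^{2g}\rho_0 = \rho_1$ and produces the de Branges function $\hat E_0(z,n) = \E^{-g(z)} E_0(z,n)$. By Corollary~\ref{cordBschrembL2} applied to the rescaled solutions, both chains $\{\hat B_0(n) = \E^{-g} B_0(n)\}_{n\in\Z}$ and $\{B_1(n)\}_{n\in\Z}$ are strictly increasing chains of de Branges subspaces of $L^2(\R,d\rho_1)$ with codimension-one jumps and dense union.

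Second, I would apply the ordering theorem (Theorem~\ref{thmdBOrdering}) to each pair $\hat B_0(n)$, $B_1(m)$. Using Lemma~\ref{lemdBordcon}, the quotients $E_0(\,\cdot\,,n)/E_0(\,\cdot\,,n_0)$ and $E_1(\,\cdot\,,n_1)/E_1(\,\cdot\,,m)$ are of bounded type in $\C^+$, and by hypothesis so is $\E^g E_0(\,\cdot\,,n_0)/E_1(\,\cdot\,,n_1)$. Writing $\E^g = [\E^g E_0(\,\cdot\,,n_0)/E_1(\,\cdot\,,n_1)]/[E_0(\,\cdot\,,n_0)/E_1(\,\cdot\,,n_1)]$, with the denominator a quotient of Hermite--Biehler functions and hence of bounded type, shows $\E^{-2g}$ is of bounded type, whence so is
\[
 \frac{\hat E_0(z,n)}{E_1(z,m)} = \frac{E_0(z,n)}{E_0(z,n_0)} \cdot \frac{\E^{g(z)}E_0(z,n_0)}{E_1(z,n_1)} \cdot \frac{E_1(z,n_1)}{E_1(z,m)} \cdot \E^{-2g(z)}.
\]
Since $\E^{\pm g}$ are zero-free and $E_0$, $E_1$ have no real zeros, the ordering theorem applies, and any two members of $\{\hat B_0(n)\}_{n\in\Z} \cup \{B_1(m)\}_{m\in\Z}$ are comparable by inclusion.

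Third, pairwise comparability together with the codimension-one structure pins down a single integer shift. Nontriviality of $\hat B_0(n_0)$ combined with $\bigcap_{m\in\Z}B_1(m)=\{0\}$ (which follows from unitarity of $V_1$ since $V_1^{-1}(B_1(m))=\ell^2(-\infty,m]$) gives some $m_0$ with $B_1(m_0-1)\subsetneq\hat B_0(n_0)\subseteq B_1(m_0)$; codimension one forces $\hat B_0(n_0) = B_1(m_0)$, and with $k_0 := m_0-n_0$ the matching codimension-one increments propagate this to $\hat B_0(n) = B_1(n+k_0)$ for every $n\in\Z$.

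Finally, I would equate the reproducing kernels. Subtracting the identity at index $n-1$ from the one at $n$, using~\eqref{eqndBschrRepKer}, yields the entire identity $\hat\phi_0(\zeta,n)^*\,\hat\phi_0(z,n) = \phi_1(\zeta,n+k_0)^*\,\phi_1(z,n+k_0)$. Fixing $\zeta\in\R$ where both sides are nonzero forces $\phi_1(z,n+k_0) = c_n\,\hat\phi_0(z,n)$ for a real constant $c_n$; plugging into the three-term recurrences and using $a_0, a_1 > 0$ from Hypothesis~\ref{hab} forces $c_n$ to be independent of $n$, yielding $a_0(n) = a_1(n+k_0)$ and $b_0(n) = b_1(n+k_0)$, i.e.\ $H_0 = S^{-k_0} H_1 S^{k_0}$. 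The main obstacle is Step~3 — extracting a uniform shift from pairwise comparability requires simultaneously handling the codimension-one jumps, the density of the union, and the triviality of the intersection.
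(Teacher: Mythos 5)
Your overall architecture---gauge away $g$, propagate the bounded-type property along each chain via Lemma~\ref{lemdBordcon}, invoke Theorem~\ref{thmdBOrdering} pairwise, align the two chains using the codimension-one jumps together with density of the union and triviality of the intersection, and finally read off the coefficients from the reproducing kernels \eqref{eqndBschrRepKer} and the three-term recurrence---is exactly the route the paper takes; your Step~3 is in fact a more explicit version of the paper's ``increase $n_0$ until $B_0(n_0)\subseteq B_1(n_1)\subsetneq B_0(n_0+1)$'' argument.

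There is, however, one genuine gap, located in Step~2 (equivalently, in the choice of gauge in Step~1). You assert that $E_0(\,\cdot\,,n_0)/E_1(\,\cdot\,,n_1)$ is of bounded type in $\C^+$ ``because it is a quotient of Hermite--Biehler functions'', and use this to conclude that $\E^{-2g}$ is of bounded type. This is false: a Hermite--Biehler function need not be of bounded type in the upper half-plane (for instance $E(z)=\E^{z^2-\I z}$ satisfies $|E^\#(z)/E(z)|=\E^{-2\im(z)}<1$ on $\C^+$ but $\int_\R \ln^+|E(\lam)|\,(1+\lam^2)^{-1}d\lam=\infty$, which already excludes bounded type), and a quotient of two such functions certainly need not be. Indeed, if your claim were true, the hypothesis \eqref{eqnquotE1E2} of the very theorem you are proving would be vacuous for $g=0$, and Corollary~\ref{cordBuniqScor} would not need its Cartwright-class assumption, which is imposed there precisely to guarantee bounded type. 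Without this claim, your rescaled quotient $\hat E_0/E_1=\E^{-2g}\cdot\bigl(\E^{g}E_0/E_1\bigr)$ is not known to be of bounded type and the ordering theorem cannot be applied. The repair is to use the opposite gauge, as the paper does: replace $\phi_0$ by $\E^{g}\phi_0$, so that the new quotient $\hat E_0(\,\cdot\,,n_0)/E_1(\,\cdot\,,n_1)$ is \emph{literally} the function \eqref{eqnquotE1E2} assumed to be of bounded type, while the spectral measures are matched by the transformation rule of Remark~\ref{rem:uniq}. (Your sign choice was evidently forced by reading the displayed relation $d\rho_0=\E^{-2g}d\rho_1$ literally; with that reading the bounded-type half of the hypothesis cannot be recovered by any general argument, so the relation between $\rho_0$ and $\rho_1$ must be read with the gauge convention of Remark~\ref{rem:uniq}, i.e.\ so that the gauge $\phi_0\mapsto\E^{g}\phi_0$ fixes both hypotheses at once.) A second, minor, omission: after identifying the coefficient sequences up to the shift $k_0$ you must still match the boundary conditions. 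At $-\infty$ this is immediate from $\phi_0(z,n)=\pm\,\phi_1(z,n+k_0)$, but at $+\infty$, in the limit circle case, you need the separate argument from the proof of Corollary~\ref{corbm} via a common eigenvalue.
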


\begin{proof}
First of all note that without loss of generality we may assume that $g$ vanishes identically, since otherwise we replace $\phi_0(z,n)$ with $\E^{g(z)} \phi_0(z,n)$.
Moreover, because of Lemma~\ref{lemdBordcon} the function in~\eqref{eqnquotE1E2} is of bounded type for all points $n_0$, $n_1\in\Z$.
 Hence Corollary~\ref{cordBschrembL2} and Theorem~\ref{thmdBOrdering} (note that~\eqref{eqnquotE1E2} has no real zeros or singularities because $E_0(\,\cdot\,,n_0)$ and $E_1(\,\cdot\,,n_1)$ do not have real zeros) 
  imply that either $B_0(n_0)$ is contained in $B_1(n_1)$ or $B_1(n_1)$ is contained in $B_0(n_0)$. 
 Without loss of generality we may assume
$B_0(n_0) \subseteq B_1(n_1)$. Moreover, by \eqref{eqnBn} and \eqref{eqndBschrdense} we can further increase $n_0$ such that
$B_0(n_0) \subseteq B_1(n_1) \subsetneq B_0(n_0+1)$. Then clearly $B_0(n_0) = B_1(n_1)$ and we set $k=n_1-n_0$. Moreover,
since $B_j(n)$ has codimension one in $B_j(n+1)$, we obtain by induction $B_0(n)=B_1(n+k)$ for all $n\in\Z$.
Now \eqref{eqndBschrRepKer} shows $\phi_0(z,n)= \sigma_n \phi_1(z,n+k)$ for some sequence $\sig_n\in\{\pm 1\}$, which is actually independent of $n$ in view of Lemma~\ref{lemPhiAs}.
 Similarly as in the proof of Theorem~\ref{thmbm}, this ensures that the coefficient sequences and the boundary conditions at $-\infty$ are the same up to a shift by $k$. 
 Finally, the fact that the boundary conditions at $\infty$ are the same follows as in the proof of Corollary~\ref{corbm}.
\end{proof}

 Note that the quotient~\eqref{eqnquotE1E2} in Theorem~\ref{thmdBuniqS} is of bounded type if and only if 
 \[
  \E^{g(z)} \frac{\phi_0(z,n_0)}{\phi_1(z,n_1)}, \quad z\in\C^+
 \]
 is of bounded type. 
 We conclude this section by showing that this condition holds if the solutions $\phi_0(z,n)$, $\phi_1(z,n)$ satisfy some growth condition. 
Therefore recall that an entire function $F$ belongs to the Cartwright class $\mathcal{C}$ if it is of finite exponential type and the logarithmic integral
\[
 \int_\R \frac{\ln^+|F(\lam)|}{1+\lam^2}d\lam < \infty, 
\]
 where $\ln^+$ is the positive part of the natural logarithm. In particular, note that the class $\mathcal{C}$ contains all entire functions of growth order less than one.
Now a theorem of Krein~\cite[Theorem~6.17]{rosrov}, \cite[Section~16.1]{lev} states that the class $\mathcal{C}$ consists of all entire functions which are of bounded type in the upper and in the lower complex half-plane.
Since the quotient of two functions of bounded type is of bounded type itself, this immediately yields the following uniqueness result.

\begin{corollary}\label{cordBuniqScor}
 Suppose that $\phi_0(\,\cdot\,,n_0)$ and $\phi_1(\,\cdot\,,n_1)$ belong to the Cart\-wright class $\mathcal{C}$ for some $n_0$, $n_1\in\Z$.
 If $\rho_0=\rho_1$, then $H_0$ and $H_1$ are equal up to a shift.
\end{corollary}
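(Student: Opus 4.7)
The plan is to reduce the corollary directly to Theorem~\ref{thmdBuniqS} via the remark immediately following it, which reformulates the bounded-type condition on $E_0(\,\cdot\,,n_0)/E_1(\,\cdot\,,n_1)$ as the analogous condition on the quotient $\phi_0(\,\cdot\,,n_0)/\phi_1(\,\cdot\,,n_1)$. Since we are given $\rho_0=\rho_1$, the natural choice of the auxiliary real entire function is $g\equiv 0$, so that the hypothesis $d\rho_0(\lam)=\E^{-2g(\lam)}d\rho_1(\lam)$ of Theorem~\ref{thmdBuniqS} holds trivially.

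First, I would invoke Krein's characterization of the Cartwright class $\mathcal{C}$ as exactly those entire functions which are of bounded type in both the upper and the lower half-plane. Applied to our assumption that $\phi_0(\,\cdot\,,n_0)$ and $\phi_1(\,\cdot\,,n_1)$ both belong to $\mathcal{C}$, this gives that each of them is of bounded type in $\C^+$. Since the collection of functions of bounded type in a half-plane is closed under quotients (it is an algebra localized at nonzero elements), the ratio
\[
\frac{\phi_0(z,n_0)}{\phi_1(z,n_1)}, \quad z\in\C^+,
\]
is itself of bounded type.

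Next, I would apply the remark right after Theorem~\ref{thmdBuniqS}: this bounded-type property for the $\phi$-quotient is equivalent to the bounded-type property for the $E$-quotient
\[
\frac{E_0(z,n_0)}{E_1(z,n_1)}, \quad z\in\C^+.
\]
With this in hand, together with $g\equiv 0$ and $\rho_0=\rho_1$, all hypotheses of Theorem~\ref{thmdBuniqS} are satisfied, and its conclusion yields that $H_0$ and $H_1$ agree up to a shift.

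There is essentially no serious obstacle once the two ingredients (Krein's theorem on $\mathcal{C}$, and the stability of bounded type under quotients) are invoked; the mild subtlety is simply justifying passage from the $\phi$-quotient to the $E$-quotient, which is exactly the content of the remark following Theorem~\ref{thmdBuniqS} and so does not need a separate argument here.
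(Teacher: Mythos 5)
Your proposal is correct and follows exactly the paper's own argument: Krein's theorem identifies the Cartwright class with entire functions of bounded type in both half-planes, closure of bounded type under quotients gives the bounded-type property for $\phi_0(\,\cdot\,,n_0)/\phi_1(\,\cdot\,,n_1)$, and the remark after Theorem~\ref{thmdBuniqS} together with $g\equiv 0$ and $\rho_0=\rho_1$ lets you apply that theorem. No discrepancies to report.
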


\bigskip
\noindent
{\bf Acknowledgments.}
We thank Fritz Gesztesy for hints with respect to the literature and the anonymous referee for suggestions improving the presentation
of the material. G.T.\ gratefully acknowledges the stimulating
atmosphere at the Isaac Newton Institute for Mathematical Sciences in Cambridge
during October 2011 where parts of this paper were written as part of  the international
research program on Inverse Problems.

\end{document}